\numberwithin{equation}{section}
\newtheorem{theorem}[equation]{Theorem}
\newtheorem{proposition}[equation]{Proposition}
\newtheorem{lemma}[equation]{Lemma}
\newtheorem{corollary}[equation]{Corollary}
\theoremstyle{definition}
\def\C{\mathbb C}
\def\E{\mathscr E}
\def\L{\mathscr L}
\def\N{\mathbb N}
\def\R{\mathbb R}
\def\Z{\mathbb Z}
\def\Dom{\mathcal D}
\def\opm{\textup{op}_{\textit{M}}}
\def\open#1{\smash[t]{\overset{{}_{\circ}}{#1}{}}}
\DeclareMathOperator{\Diff}{Diff}
\DeclareMathOperator{\End}{End}
\DeclareMathOperator{\ind}{ind}
\DeclareMathOperator{\spec}{spec}
\DeclareMathOperator{\Span}{span}
\DeclareMathOperator{\sig}{sgn}
\DeclareMathOperator{\supp}{supp}
\DeclareMathOperator{\SF}{SF}
\DeclareMathOperator{\sym}{ \sigma\!\!\!\sigma}
\begin{document}
\title[Cobordism invariance of the index revisited]{Cobordism invariance of the index for realizations of elliptic operators revisited}

\author{Thomas Krainer}
\address{Penn State Altoona\\ 3000 Ivyside Park \\ Altoona, PA 16601-3760}
\email{krainer@psu.edu}

\begin{abstract}
We revisit an argument due to Lesch \cite{Le93,Le97} for proving the cobordism invariance of the index of Dirac operators on even-dimensional closed manifolds and combine this with recent work by the author \cite{KrainerSymmetricOps} to show vanishing results for the spectral flow for families of selfadjoint Fredholm realizations of elliptic operators in case the family is induced on the boundary by an elliptic operator on a compact space. This work is motivated by studying the behavior of the index of realizations of elliptic operators under cobordisms of statified manifolds.
 \end{abstract}

\subjclass[2020]{Primary: 58J20; Secondary: 58J05, 58J32, 58J30}
\keywords{Manifolds with singularities, index theory, cobordism}

\maketitle


\section{Introduction}

\noindent
One of the original proofs of the Atiyah-Singer Index Theorem is based on showing that the index of Dirac type operators is invariant under cobordisms, see Palais \cite{Palais}. This proof is analytic in nature and rooted in the classical theory of elliptic boundary value problems. Other proof strategies for the index theorem such as the heat equation proof have generally been favored because these proofs require less sophisticated analytic techniques than the original cobordism proof.

Higson \cite{Higson} gave a proof of the cobordism invariance of the index by attaching an infinite half-cylinder to the boundary and extending the operator from the manifold with boundary to the manifold with cylindrical end. The Dirac type operator on the resulting odd-dimensional complete manifold is essentially selfadjoint, and the analytic arguments involved in Higson's proof are considerably simpler compared to the original proof. Lesch \cite{Le93}, on the other hand, gave a proof by attaching a (generalized) cone to the boundary and extended the operator from the manifold with boundary to a cone operator; while conic manifolds are incomplete and thus dealing with domains of realizations of the resulting conic Dirac type operator is needed, Lesch's approach is still much simpler from a functional analytic point of view than the original proof because the maximal and minimal domains of $L^2$-based realizations in the conic case differ only by a finite-dimensional space -- the price to pay is the more intricate analysis to deal with the singularity which at this juncture has been introduced artificially. Several other analytic proofs of the cobordism invariance of the index \cite{Braverman,Nicolaescu}, a $K$-theory proof \cite{Carvalho}, and generalizations \cite{BravermanShi,Hilsum,Moroianu,Wulff} have since been found.

This note is motivated by recent advances in elliptic theory on stratified manifolds with incomplete iterated wedge metrics \cite{AlbinLeichtnamMazzeoPiazzaWitt,AlbinLeichtnamMazzeoPiazzaHodge,HartmannLeschVertmanDomain,HartmannLeschVertmanAsymptotics,NazaikinskiSavinSchulzeSternin,SchulzeFields} and gives an application of the spectral flow formula for indicial operators obtained in our recent paper \cite{KrainerSymmetricOps}. Stratified cobordisms and the cobordism invariance of the index for the signature operator have been considered in \cite{AlbinLeichtnamMazzeoPiazzaWitt,AlbinLeichtnamMazzeoPiazzaHodge}, where especially in \cite{AlbinLeichtnamMazzeoPiazzaHodge} the operator is no longer essentially selfadjoint and suitable boundary conditions associated with the singular strata are considered; stratified cobordism and the invariance of the index are used in an essential way to establish the properties of the signature of a Cheeger space considered in that paper.

From our point of view Lesch's proof \cite{Le93,Le97} of the cobordism invariance of the index is very natural in the context of elliptic theory on stratified manifolds because, unlike in the classical smooth case, singular analysis and dealing with boundary conditions associated with singular strata already are essential features of the investigations here.

In this note we will revisit and extend Lesch's proof from the Dirac case to more general operators of any order, and what amounts to the vanishing of the index in the Dirac case (for null-cobordisms) will accordingly generalize to the vanishing of the spectral flow for indicial families. Our recent paper \cite{KrainerSymmetricOps} on indicial operators, which are abstract functional analytic model operators associated to generalized conical singularities, is the basis for this. We will only be concerned with null-cobordisms and proving vanishing results here; more general notions of cobordisms and cobordism invariance follow upon reduction to this case. Without detailing the precise assumptions, the argument proceeds as follows:

Let $(M,g)$ be a Riemannian manifold, and let $U = U(Y) \subset M$ be an open subset that is isometric to $(0,\varepsilon) \times Y$ with product metric $dx^2 + g_Y$ for some $\varepsilon > 0$, where $(Y,g_Y)$ is another Riemannian manifold. The reader ought to think of both $M$ and $Y$ as the open interior of compact stratified manifolds $\overline{M}$ and $\overline{Y}$ equipped with incomplete iterated wedge metrics, where $\overline{Y}$ is a boundary hypersurface of $\overline{M}$, and $U(Y)$ is a collar neighborhood. Let $\E \to M$ be a Hermitian vector bundle such that $\E\big|_{U(Y)} \cong \pi_Y^*E$ isometrically, where $E \to Y$ is a Hermitian vector bundle, and $\pi_Y : (0,\varepsilon) \times Y \to Y$ is the canonical projection. Let
$$
A : C_c^{\infty}(M;\E) \to C_c^{\infty}(M;\E)
$$
be an elliptic differential operator of order $\mu \geq 1$ that is symmetric with respect to the inner product induced by the Riemannian and Hermitian metrics, and suppose that $A$ is in $U(Y)$ of the form
$$
A \cong A_{\wedge} = x^{-1}\sum\limits_{j=0}^{\mu}a_j(y,D_y)(xD_x)^j : C_c^{\infty}((0,\varepsilon)\times Y;\pi_Y^*E) \to C_c^{\infty}((0,\varepsilon)\times Y;\pi_Y^*E),
$$
where $a_j(y,D_y) \in \Diff^{\mu-j}(Y;E)$. Let
$$
p(\sigma) = \sum\limits_{j=0}^{\mu}a_j(y,D_y)\sigma^j : C_c^{\infty}(Y;E) \to C_c^{\infty}(Y;E), \; \sigma \in \C,
$$
be the indicial family. Now suppose that
\begin{equation}\label{AminOpIntro}
A_{\min} : \Dom_{\min}(A) \subset L^2(M;\E) \to L^2(M;\E)
\end{equation}
is some closed symmetric extension of $A : C_c^{\infty}(M;\E) \subset L^2(M;\E) \to L^2(M;\E)$, and let $A_{\max} : \Dom_{\max}(A) \subset L^2(M;\E) \to L^2(M;\E)$ be the adjoint -- we point out here that $A_{\min}$ is not necessarily the minimal extension of $A$ from $C_c^{\infty}(M;\E)$, and therefore $A_{\max}$ is not the largest $L^2$-based closed extension either, i.e. we only have
\begin{align*}
\Dom_{\min}(A) &\supset \{u \in L^2(M;\E);\; \exists u_k \in C_c^{\infty}(M;\E), u_k \to u \textup{ in } L^2(M;\E), \\
&\qquad\qquad \textup{ and } Au_k \subset L^2(M;\E) \textup{ Cauchy}\}, \\
\Dom_{\max}(A) &\subset \{u \in L^2(M;\E);\; \exists v \in L^2(M;\E) : \\
&\qquad\qquad \langle A\phi,u \rangle_{L^2(M;\E)} = \langle \phi,v \rangle_{L^2(M;\E)} \;\forall \phi \in C_c^{\infty}(M;\E)\},
\end{align*}
and these inclusions are generally proper. The reader ought to think of the operator $A$ as an elliptic iterated incomplete wedge operator on $\overline{M}$, and the domain $\Dom_{\min}(A)$ as determined by previously chosen boundary conditions for $A$ associated with singular strata of $\overline{M}$ away from the boundary hypersurface $\overline{Y} \subset \overline{M}$.

One of the main points now is that under suitable localization and compatibility assumptions these extensions of $A$ should localize to $U(Y)$ and be fully captured by the extensions of the indicial operator
\begin{equation}\label{Awedgeintro}
A_{\wedge} : C_c^{\infty}(\R_+;E_1) \subset L^2(\R_+\times Y;\pi_Y^*E) \to L^2(\R_+\times Y;\pi_Y^*E).
\end{equation}
Here
$$
H^{\mu}_{\textup{comp}}(Y;E) \subset E_1 \subset H^{\mu}_{\textup{loc}}(Y;E)
$$
is the common domain for the indicial family $p(\sigma) : E_1 \subset  E_0 \to E_0$, $\sigma \in \C$, where $E_0 = L^2(Y;E)$, giving rise to a holomorphic family of unbounded Fredholm operators that are selfadjoint for $\sigma \in \R$. The reader ought to think of $E_1$ as determined by certain lateral boundary conditions associated with the singular strata of $\overline{Y}$, obtained via restriction to $U(Y)$ by the previously determined boundary conditions on $\overline{M}$ for $A$ that gave rise to $\Dom_{\min}(A)$; the localization and compatibility assumptions are such that the boundary conditions previously chosen for $A$ on $\overline{M}$ should be selfadjoint away from the boundary hypersurface $\overline{Y}$. The upshot of all of this is that we obtain a unitary equivalence
$$
\bigl(\Dom_{\max}(A)/\Dom_{\min}(A),[\cdot,\cdot]_A\bigr) \cong \bigl(\Dom_{\max}(A_{\wedge})/\Dom_{\min}(A_{\wedge}),[\cdot,\cdot]_{A_{\wedge}}\bigr)
$$
of finite-dimensional indefinite inner product spaces by passing to representatives supported in $U(Y) \cong (0,\varepsilon)\times Y$, thus allowing transitioning between $M$ and $\R_+\times Y$; here
\begin{gather*}
[\cdot,\cdot]_A : \Dom_{\max}(A) \times \Dom_{\max}(A) \to \C, \\
[u,v]_A = \frac{1}{i}\Bigl[\langle A_{\max}u,v \rangle_{L^2} - \langle u,A_{\max}v \rangle_{L^2}\Bigr]
\end{gather*}
is the adjoint pairing, and likewise for $[\cdot,\cdot]_{A_{\wedge}}$, while $\Dom_{\min}(A_{\wedge})$ is the domain of the closure $A_{\wedge,\min}$ of \eqref{Awedgeintro}, and $\Dom_{\max}(A_{\wedge})$ is the domain of the adjoint $A_{\wedge,\max} = A_{\wedge,\min}^*$. In particular, we have
$$
\sig\bigl(\Dom_{\max}(A)/\Dom_{\min}(A),[\cdot,\cdot]_A\bigr) = \sig\bigl(\Dom_{\max}(A_{\wedge})/\Dom_{\min}(A_{\wedge}),[\cdot,\cdot]_{A_{\wedge}}\bigr)
$$
for the signatures of these spaces. On the one hand, using the spectral flow formula from \cite{KrainerSymmetricOps}, we have
\begin{gather*}
\sig\bigl(\Dom_{\max}(A_{\wedge})/\Dom_{\min}(A_{\wedge}),[\cdot,\cdot]_{A_{\wedge}}\bigr) = 
\SF[\,p(\sigma) : E_1 \subset  E_0 \to  E_0,\; -\infty < \sigma < \infty\,],
\end{gather*}
while on the other hand $\sig\bigl(\Dom_{\max}(A)/\Dom_{\min}(A),[\cdot,\cdot]_A\bigr) = 0$ if \eqref{AminOpIntro} is Fredholm or the embedding $\Dom_{\max}(A) \hookrightarrow L^2(M;\E)$ is compact, which combined leads to the desired conclusion that
$$
\SF[\,p(\sigma) : E_1 \subset  E_0 \to  E_0,\; -\infty < \sigma < \infty\,] = 0.
$$
In the Dirac case the spectral flow of the indicial family is easily seen to equal the Fredholm index of the operator $D : \Dom(D) \subset L^2(Y;E_-) \to L^2(Y;E_+)$ on the even-dimensional boundary $Y$, thus recovering cobordism invariance of the index in this context.

The structure of this paper is as follows: In Section~\ref{sec-ExtensionTheory} we briefly review what is needed from extension theory of symmetric operators, in particular the criteria that ensure that $\bigl(\Dom_{\max}(A)/\Dom_{\min}(A),[\cdot,\cdot]_A\bigr)$ is finite-dimensional with signature zero. In Section~\ref{sec-indicialoperators} we review results from our paper \cite{KrainerSymmetricOps} on indicial operators in the form in which they are needed here; we also address in this section how indicial operators of first order that model the Dirac case fit into this framework in order to obtain the desired conclusions about the cobordism invariance of the index when specializing to such operators. In Section~\ref{sec-Theorem} we fill in the details of the outline above and prove the null-cobordism theorem (Theorem~\ref{CobordismInvarianceProductType}). Finally, in Appendix~\ref{sec-closedmanifolds}, we discuss the null-cobordism theorem for smooth manifolds; assumptions appear much weaker here on the geometry and the participating objects because the analytic tools available in this case are rich enough to create the preconditions needed to apply the null-cobordism theorem rather than having to assume them from the outset. With the ongoing further development of singular analysis on stratified manifolds we anticipate similar reductions and simplifications for such cases in the future as well.


\section{Preliminaries from extension theory}\label{sec-ExtensionTheory}

\noindent
Let $H$ be a separable complex Hilbert space, and suppose $A_{\min} : \Dom_{\min} \subset H \to H$ is closed, densely defined, and symmetric. Let $A_{\max}:= A_{\min}^* : \Dom_{\max} \subset H \to H$ be the adjoint. We equip $\Dom_{\max}$ with the graph inner product
$$
\langle u,v \rangle_{A_{\max}} = \langle u,v \rangle + \langle A_{\max}u,A_{\max}v \rangle
$$
and associated graph norm. Then $\Dom_{\min} \subset \bigl(\Dom_{\max},\|\cdot\|_{A_{\max}}\bigr)$ is a closed subspace, and
$$
\Dom_{\max} = \Dom_{\min} \oplus \ker(A_{\max} + i) \oplus \ker(A_{\max} - i)
$$
by von Neumann's formulas. The dimensions
$$
n_{\pm} = \dim\ker(A_{\max} \mp  \lambda i) \in \N_0\cup\{\infty\}, \quad \lambda > 0,
$$
are the deficiency indices of the operator $A_{\min}$ and independent of $\lambda >0$. The operators
$$
A_{\min} \pm i \lambda : \Dom_{\min} \subset H \to H, \quad \lambda > 0,
$$
are injective and have closed range, and we have $n_{\pm} < \infty$ if and only if $A_{\min} \pm i\lambda$ is Fredholm, in which case $n_{\pm} = -\ind(A_{\min} \pm i\lambda)$. The adjoint pairing
\begin{gather*}
[\cdot,\cdot]_A : \Dom_{\max} \times \Dom_{\max} \to \C, \\
[u,v]_A = \frac{1}{i}\Bigl[\langle A_{\max}u,v \rangle - \langle u,A_{\max}v \rangle\Bigr]
\end{gather*}
descends to a nondegenerate Hermitian sesquilinear form (indefinite inner product)
$$
[\cdot,\cdot] : \Dom_{\max}/\Dom_{\min} \times \Dom_{\max}/\Dom_{\min} \to \C.
$$
If $\dim\Dom_{\max}/\Dom_{\min} < \infty$, i.e. if $A_{\min}$ has finite deficiency indices, the signature of the adjoint pairing is given by
$$
\sig\bigl(\Dom_{\max}/\Dom_{\min},[\cdot,\cdot]\bigr) = n_+ - n_-.
$$
The following criteria are standard and useful for verification that $n_+ = n_- < \infty$.

\begin{proposition}
Suppose $A_{\min} : \Dom_{\min} \subset H \to H$ is Fredholm. Then $A_{\min}$ has finite and equal deficiency indices, and therefore
$$
\sig\bigl(\Dom_{\max}/\Dom_{\min},[\cdot,\cdot]\bigr) = 0.
$$
\end{proposition}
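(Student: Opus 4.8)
The plan is to reduce everything to the facts already recorded above: it suffices to verify that $A_{\min}\pm i\lambda$ is Fredholm for some $\lambda>0$ (equivalently, in view of the independence of $n_\pm$ of $\lambda$, for every $\lambda>0$) and that the two resulting indices agree. Granting this, $n_\pm = -\ind(A_{\min}\pm i\lambda)$ are finite and equal, so by von Neumann's formula $\dim\Dom_{\max}/\Dom_{\min} = n_+ + n_- < \infty$ and $\sig\bigl(\Dom_{\max}/\Dom_{\min},[\cdot,\cdot]\bigr) = n_+ - n_- = 0$.

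First I would regard $A_{\min}$ as a \emph{bounded} operator from $\Dom_{\min}$, equipped with the graph norm, into $H$; it is Fredholm in this sense precisely when it is Fredholm as an unbounded operator, and with the same index. For $z\in\C$ the operator $A_{\min}+z$ with unchanged domain $\Dom_{\min}$ is again bounded on the graph-normed domain, and since the graph norm dominates the norm of $H$ we have $\|(A_{\min}+z)-A_{\min}\|\le |z|$ in operator norm; thus $z\mapsto A_{\min}+z$ is a norm-continuous family of bounded operators out of the graph-normed domain.

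Next I would invoke the openness of the set of Fredholm operators and the local constancy of the index under perturbations small in operator norm: there is $\delta>0$ so that $A_{\min}+z$ is Fredholm with $\ind(A_{\min}+z)=\ind(A_{\min})$ whenever $|z|<\delta$. Choosing $z=\pm i\lambda$ with $0<\lambda<\delta$ then gives that $A_{\min}\pm i\lambda$ is Fredholm with $\ind(A_{\min}+i\lambda)=\ind(A_{\min})=\ind(A_{\min}-i\lambda)$, which is exactly what is needed. The one step that demands care is precisely this one: because the inclusion of the graph-normed domain $\Dom_{\min}$ into $H$ need not be compact, the perturbation $\pm i\lambda$ is not a compact perturbation of $A_{\min}$, so one cannot appeal to stability of the Fredholm index under compact perturbations; what rescues the argument is that $\pm i\lambda$ has small operator norm when $A_{\min}$ is viewed on its graph-normed domain, so that the elementary stability of the Fredholm property under small-norm perturbations already suffices.
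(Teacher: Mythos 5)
Your proposal is correct and follows essentially the same route as the paper: both arguments view $\lambda \mapsto A_{\min} + i\lambda$ as a norm-continuous family of bounded operators from the graph-normed domain into $H$, use openness of the Fredholm set together with local constancy of the index to conclude that $A_{\min} \pm i\lambda$ is Fredholm with $\ind(A_{\min}+i\lambda) = \ind(A_{\min}-i\lambda)$, and then read off $n_\pm = -\ind(A_{\min}\pm i\lambda)$ from the facts recorded in the preliminaries. The only cosmetic difference is that the paper extends the homotopy over all of $\R$ while you work near $\lambda = 0$, which suffices.
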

\begin{proof}
Because $A_{\min} : \Dom_{\min} \subset H \to H$ is Fredholm there exists $\varepsilon > 0$ such that $A_{\min} + i\lambda : \Dom_{\min} \subset H \to H$ is Fredholm for $-\varepsilon < \lambda < \varepsilon$, and consequently both $n_{\pm} < \infty$ and $A_{\min} + i\lambda$ is Fredholm for all $\lambda \in \R$. Now
$$
\R \ni \lambda \mapsto A_{\min} + i\lambda : \Dom_{\min} \subset H \to H
$$
is a continuous Fredholm function and therefore has constant index. Thus
$$
n_+ = -\ind(A_{\min} + i) = -\ind(A_{\min} - i) = n_-.
$$
\end{proof}

\begin{proposition}\label{ElementaryDeficiency2}
If the embedding $\bigl(\Dom_{\max},\|\cdot\|_{A_{\max}}\bigr) \hookrightarrow H$ is compact then $A_{\min}$ has finite and equal deficiency indices.
\end{proposition}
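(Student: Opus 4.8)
\emph{The plan} is to derive from the compactness hypothesis that $A_{\min}$ is itself Fredholm; the assertion then follows at once from the preceding Proposition. Observe first that on $\Dom_{\min}$ the graph norm $\|\cdot\|_{A_{\max}}$ coincides with the graph norm of $A_{\min}$, and that $\Dom_{\min}$ is a closed subspace of $\bigl(\Dom_{\max},\|\cdot\|_{A_{\max}}\bigr)$. Hence the embedding $\bigl(\Dom_{\min},\|\cdot\|_{A_{\min}}\bigr)\hookrightarrow H$ is compact as well, being the restriction of a compact map to a closed subspace.

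The first step is to show that $\ker A_{\min}$ and $\ker A_{\max}$ are finite-dimensional. On each of these subspaces the relevant graph norm agrees with $\|\cdot\|$, so the compact embedding restricts there to the identity map of the subspace equipped with $\|\cdot\|$, which can be compact only if the subspace is finite-dimensional. Since $\ker A_{\max}=\ker A_{\min}^{*}=\bigl(\operatorname{ran}A_{\min}\bigr)^{\perp}$, we conclude that $A_{\min}$ has a finite-dimensional kernel and a finite-dimensional cokernel, provided its range is closed.

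The substantive step is to prove that $\operatorname{ran}A_{\min}$ is closed, and I expect this a priori estimate to be the main obstacle. Suppose $A_{\min}u_{n}\to v$ in $H$; subtracting the $H$-orthogonal projection onto the finite-dimensional space $\ker A_{\min}$ — which does not change $A_{\min}u_{n}$ — we may assume $u_{n}\perp\ker A_{\min}$. I claim that $\|u_{n}\|$ stays bounded: otherwise, after passing to a subsequence with $\|u_{n}\|\to\infty$ and putting $w_{n}=u_{n}/\|u_{n}\|$, we have $\|w_{n}\|=1$ and $A_{\min}w_{n}\to 0$, so $(w_{n})$ is bounded in the graph norm; by the compact embedding a subsequence converges in $H$ to some $w$ with $\|w\|=1$, and closedness of $A_{\min}$ forces $w\in\Dom_{\min}$ with $A_{\min}w=0$, contradicting $w\perp\ker A_{\min}$. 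Hence $(u_{n})$ is bounded in the graph norm, a subsequence converges in $H$ to some $u$, and closedness of $A_{\min}$ yields $u\in\Dom_{\min}$ with $A_{\min}u=v$. Therefore $\operatorname{ran}A_{\min}$ is closed, $A_{\min}$ is Fredholm, and the preceding Proposition shows that $A_{\min}$ has finite and equal deficiency indices.
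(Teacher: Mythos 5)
Your argument is correct, but it takes a genuinely different route from the paper's. You prove the stronger statement that $A_{\min}$ itself is Fredholm --- finite-dimensional kernel, closed range via the standard a priori estimate for sequences orthogonal to the kernel, and cokernel identified with the finite-dimensional space $\ker A_{\max}$ --- and then reduce to the preceding proposition. The paper never touches $\operatorname{ran}A_{\min}$: since $A_{\min}\pm i$ is automatically injective with closed range for any closed symmetric operator, finiteness of the deficiency indices follows directly from $\dim\ker(A_{\max}\mp i)<\infty$, which is read off from the compact embedding exactly as in your kernel step (the graph norm and the $H$-norm agree on these kernels). Equality is then obtained not by the homotopy $\lambda\mapsto A_{\min}+i\lambda$ of the first proposition but by observing that $A_{\min}+i$ and $A_{\min}-i$ differ by $2i$ times the inclusion $\Dom_{\min}\hookrightarrow H$, which is compact by hypothesis, so the two indices coincide. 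Your closed-range estimate is therefore extra work that the paper's shorter argument avoids, but it buys a genuinely stronger byproduct: under the compactness hypothesis $A_{\min}$ is automatically Fredholm, so the hypothesis of the preceding proposition is in fact implied by that of this one.
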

\begin{proof}
The norms $\|\cdot\|_{A_{\max}}$ and $\|\cdot\|_H$ are equivalent on $\ker(A_{\max} \pm i)$, and the identity map $\bigl(\ker(A_{\max} \pm i),\|\cdot\|_{A_{\max}}\bigr) \to \bigl(\ker(A_{\max} \pm i),\|\cdot\|_{H}\bigr)$ is compact by assumption. Thus $\dim\ker(A_{\max}\pm i) < \infty$. Now $A_{\min} \pm i : \Dom_{\min} \subset H \to H$ are both Fredholm, and because $\Dom_{\min} \hookrightarrow H$ is compact we have $\ind(A_{\min} - i) = \ind(A_{\min} + i)$. The proposition is proved.
\end{proof}


\section{Indicial operators}\label{sec-indicialoperators}

\noindent
We consider indicial operators of the form
\begin{equation}\label{Aindicial}
A_{\wedge} = x^{-1}\sum\limits_{j=0}^{\mu} a_j(xD_x)^j : C_c^{\infty}(\R_+;E_1) \subset L^2(\R_+;E_0) \to L^2(\R_+;E_0),
\end{equation}
where $\mu \in \N$ and $E_0$ and $E_1$ are separable complex Hilbert spaces such that $E_1 \hookrightarrow E_0$ is continuous and dense, and the operators $a_j : E_1 \to E_0$ are continuous for $j=0,\ldots,\mu$. Let
\begin{equation}\label{Aindicialfamily}
p(\sigma) = \sum\limits_{j=0}^{\mu} a_j \sigma^j : E_1 \to E_0, \quad \sigma \in \C
\end{equation}
be the indicial family associated with $A_{\wedge}$. We make the following assumptions:
\begin{enumerate}[(i)]
\item $p(\sigma) : E_1 \subset E_0 \to E_0$ is closed, densely defined, and Fredholm for $\sigma \in \C$, and the map $\C \ni \sigma \mapsto p(\sigma) \in \L(E_1,E_0)$ is holomorphic.
\item We have $p(\overline{\sigma})^* = p(\sigma) : E_1 \subset E_0 \to E_0$ as unbounded operators in $E_0$.
\item For $(\lambda,\sigma) \in \R^2$ and $|\lambda,\sigma| \geq R \gg 0$ sufficiently large $p(\sigma) + i\lambda : E_1 \to E_0$ is invertible with
$$
\sup\limits_{|\lambda,\sigma| \geq R}\Bigl\{(1 + \lambda^2 + \sigma^{2\mu})^{\frac{1}{2}}\bigl\|\bigl(p(\sigma) + i\lambda\bigr)^{-1}\bigr\|_{\L(E_0)} + \bigl\|\bigl(p(\sigma) + i\lambda\bigr)^{-1}\bigr\|_{\L(E_0,E_1)}\Bigr\} < \infty,
$$
and for every $k \in \{1,\ldots,\mu\}$ we have
$$
\sup\limits_{|\lambda,\sigma| \geq R}(1 + \lambda^2 + \sigma^{2\mu})^{\frac{k}{2\mu}}
\bigl\|\bigl[\partial_{\sigma}^kp(\sigma)\bigr]\bigl(p(\sigma) + i\lambda\bigr)^{-1}\bigr\|_{\L(E_0)} < \infty.
$$
\end{enumerate}
In \cite{KrainerSymmetricOps} we systematically studied operators of the kind \eqref{Aindicial} under such assumptions. We summarize some of the findings below:
\begin{enumerate}
\item The operator \eqref{Aindicial} is symmetric and densely defined in $L^2({\mathbb R}_+;E_0)$. Let $A_{\wedge,\min}$ be its closure, and $A_{\wedge,\max} = A_{\wedge,\min}^*$ be the adjoint. Then
$$
\dim \Dom_{\max}(A_{\wedge})/\Dom_{\min}(A_{\wedge}) < \infty,
$$
i.e., $A_{\wedge}$ has finite deficiency indices.
\item The boundary spectrum
$$
\spec_b(p) = \{\sigma \in \C;\; \textup{$p(\sigma) : E_1 \to E_0$ is not invertible}\} \subset \C
$$
is discrete, and every strip $|\Im(\sigma)| \leq K$, $K > 0$, contains only finitely many elements of $\spec_b(p)$. The elements of the boundary spectrum are generally referred to as indicial roots.
\item Fix an arbitrary cut-off function $\omega \in C_c^{\infty}(\overline{\R}_+)$ with $\omega \equiv 1$ near $x = 0$. For each indicial root $\sigma_0 \in \spec_b(p)$ let
\begin{equation}\label{Esigma0pdef}
\begin{aligned}
\E_{\sigma_0}(p) = \Bigl\{u = \omega\sum\limits_{j=0}^k e_j&\log^j(x)x^{i\sigma_0};\; k \in \N_0 \textup{ and } e_j \in E_1, \\
&\textup{and } p(\sigma)(Mu)(\sigma) \textup{ is holomorphic at $\sigma=\sigma_0$}\Bigr\},
\end{aligned}
\end{equation}
where
$$
\bigr(M u\bigl)(\sigma) = \int_0^{\infty}x^{-i\sigma}u(x)\,\frac{dx}{x}
$$
is the Mellin transform of $u$. This space is finite-dimensional for every $\sigma_0$, and we have
\begin{equation}\label{DwedgemaxE}
\Dom_{\max}(A_{\wedge}) = \Dom_{\min}(A_{\wedge}) \oplus \bigoplus\limits_{\substack{\sigma_0 \in \spec_b(p) \\ -\frac{1}{2} < \Im(\sigma_0) < \frac{1}{2}}}\E_{\sigma_0}(p).
\end{equation}
\item We have
$$
x^{\frac{1}{2}}{\mathscr H}(\R_+;E_1)\cap L^2(\R_+;E_0) \hookrightarrow \Dom_{\min}(A_{\wedge}),
$$
and $\Dom_{\min}(A_{\wedge}) = x^{\frac{1}{2}}{\mathscr H}(\R_+;E_1)\cap L^2(\R_+;E_0)$ if and only if $p(\sigma) : E_1 \to E_0$ is invertible for all $\Im(\sigma)=-\frac{1}{2}$.

The space ${\mathscr H}(\R_+;E_1)$ is the completion of $C_c^{\infty}(\R_+;E_1)$ with respect to the norm
$$
\|u\|_{{\mathscr H}}^2 = \int_{\mathbb R} \|p(\sigma+i\gamma_0)(Mu)(\sigma)\|_{E_0}^2\,d\sigma,
$$
where $\gamma_0 \in \R$ is arbitrary such that $p(\sigma+i\gamma_0) : E_1 \to E_0$ is invertible for all $\sigma \in \R$. We have
$$
{\mathscr H}(\R_+;E_1) \hookrightarrow H^{\mu}_b(\R_+;E_0)\cap L^2_b(\R_+;E_1),
$$
and in typical situations these spaces are equal; this is the case, for instance, if
\begin{equation}\label{HIntersection}
\sup\limits_{\sigma \in \R}\|p(\sigma+i\gamma_0)(\langle \sigma \rangle^{\mu} + i\Lambda)^{-1}\|_{\L(E_0)} < \infty,
\end{equation}
where $\Lambda : E_1 \subset E_0 \to E_0$ is selfadjoint (e.g. for $\Lambda = p(0)$).
\item While not discussed in \cite{KrainerSymmetricOps} it is not hard to see that, under the added assumption that the embedding $E_1 \hookrightarrow E_0$ is compact, multiplication by a cut-off function $\omega \in C_c^{\infty}(\overline{\R}_+)$ with $\omega \equiv 1$ near $x =0$ induces a compact operator $\omega : x^{\alpha}{\mathscr H}(\R_+;E_1) \to L^2_b(\R_+;E_0)$ for every $\alpha > 0$\footnote{The function $a_0(x,\sigma) = x^{\alpha}\omega(x) p(\sigma +i\gamma_0)^{-1}$ is a Mellin symbol taking values in the compact operators $E_0 \to E_0$, and we have $\sup\{\langle \log(x) \rangle^j\langle \sigma \rangle^{\mu+k}\|(xD_x)^l\partial_{\sigma}^ka_0(x,\sigma)\|_{\L(E_0)};\; (x,\sigma) \in \R_+\times\R\} < \infty$ for all $j,k,l \in \N_0$. Thus the Mellin pseudodifferential operator $\opm(a_0) : L^2_b(\R_+;E_0) \to L^2_b(\R_+;E_0)$ is compact, which implies compactness of the multiplication operator $\omega : x^{\alpha}{\mathscr H}(\R_+;E_1) \to L^2_b(\R_+;E_0)$ as asserted.}.

Consequently, if additionally $p(\sigma) : E_1 \to E_0$ is invertible for all $\Im(\sigma) = -\frac{1}{2}$, we obtain a compact map $\omega : \Dom_{\max}(A_{\wedge}) \to L^2(\R_+;E_0)$, and a bounded map $1-\omega : \Dom_{\max}(A_{\wedge}) \to \Dom_{\min}(A_{\wedge})$. The latter is based on the identity $\Dom_{\min}(A_{\wedge}) = x^{\frac{1}{2}}{\mathscr H}(\R_+;E_1)\cap L^2(\R_+;E_0)$ and localization properties of the space ${\mathscr H}(\R_+;E_1)$ (see \cite[Proposition~7.6]{KrainerSymmetricOps}).

\item The adjoint pairing
\begin{gather*}
[\cdot,\cdot]_{A_{\wedge}} : \Dom_{\max}(A_{\wedge}) \times \Dom_{\max}(A_{\wedge}) \to \C, \\
[u,v]_{A_{\wedge}} = \frac{1}{i}\Bigl[\langle A_{\wedge,\max}u,v \rangle_{L^2(\R_+;E_0)} - \langle u,A_{\wedge,\max}v \rangle_{L^2(\R_+;E_0)}\Bigr]
\end{gather*}
induces a nondegenerate Hermitian sesquilinear form
$$
[\cdot,\cdot] : \Dom_{\max}(A_{\wedge})/\Dom_{\min}(A_{\wedge}) \times \Dom_{\max}(A_{\wedge})/\Dom_{\min}(A_{\wedge}) \to \C,
$$
and its signature is given by the spectral flow of the indicial family \eqref{Aindicialfamily} along the real line:
\begin{equation}\label{SignatureSpectralFlow1}
\sig\bigl(\Dom_{\max}(A_{\wedge})/\Dom_{\min}(A_{\wedge}),[\cdot,\cdot]\bigr) = \SF[\,p(\sigma) : E_1 \subset E_0 \to E_0,\; -\infty < \sigma < \infty\,].
\end{equation}
Note that $p(\sigma) : E_1 \to E_0$ is invertible for $|\sigma| \geq T \gg 0$ large enough, $\sigma \in \R$, and the spectral flow in \eqref{SignatureSpectralFlow1} then refers to $p(\sigma)$ on the interval $-T \leq \sigma \leq T$. Only crossings of real indicial roots contribute terms to the spectral flow.
\end{enumerate}

The focus in this paper is on the signature of the adjoint pairing, and by \eqref{SignatureSpectralFlow1} only real indicial roots are relevant. In order to obtain simple expressions for the minimal domain and the maximal domain \eqref{DwedgemaxE} of $A_{\wedge}$ it is sometimes convenient to introduce a scaling parameter $t > 0$ to remove any small non-real indicial roots from the strip $|\Im(\sigma)| \leq \frac{1}{2}$. This leads to
$$
A_{\wedge,t} = x^{-1}\sum\limits_{j=0}^{\mu} a_jt^j(xD_x)^j : C_c^{\infty}(\R_+;E_1) \subset L^2(\R_+;E_0) \to L^2(\R_+;E_0)
$$
with indicial family
$$
p_t(\sigma) = p(t\sigma) : E_1 \subset E_0 \to E_0, \quad \sigma \in \C,
$$
and the standing assumptions on $p(\sigma)$ imply that the analogous properties are also satisfied for $p_t(\sigma)$, and all estimates are locally uniform with respect to $t > 0$. In particular, the spectral flow
$$
\SF[\,p_t(\sigma) : E_1 \subset E_0 \to E_0,\; -\infty < \sigma < \infty\,]
$$
is independent of $t > 0$ by homotopy invariance, and thus
$$
\sig\bigl(\Dom_{\max}(A_{\wedge,t})/\Dom_{\min}(A_{\wedge,t}),[\cdot,\cdot]\bigr)
$$
is independent of $t > 0$. For $0 < t \leq t_0$ small enough, $p_t(\sigma) : E_1 \subset E_0 \to E_0$ is invertible for all $0 < |\Im(\sigma)| \leq \frac{1}{2}$. We then have
$$
\Dom_{\min}(A_{\wedge,t}) = x^{\frac{1}{2}}{\mathscr H}(\R_+;E_1)\cap L^2(\R_+;E_0),
$$
where the definition of ${\mathscr H}(\R_+;E_1)$ is accordingly based on $p_t(\sigma)$, and
$$
\Dom_{\max}(A_{\wedge,t}) = \Dom_{\min}(A_{\wedge,t}) \oplus \bigoplus\limits_{\sigma_0 \in \spec_b(p_t)\cap\R}\E_{\sigma_0}(p_t).
$$
If \eqref{HIntersection} holds for $p(\sigma)$ it is true for all $p_t(\sigma)$, and in this case the space
$$
{\mathscr H}(\R_+;E_1) = H^{\mu}_b(\R_+;E_0)\cap L^2_b(\R_+;E_1)
$$
is independent of $t > 0$; thus the minimal domain
$$
\Dom_{\min}(A_{\wedge}) = x^{\frac{1}{2}}H^{\mu}_b(\R_+;E_0)\cap x^{\frac{1}{2}}L^2_b(\R_+;E_1)\cap L^2(\R_+;E_0)
$$
is independent of $0 < t \leq t_0$.

\subsection*{Operators of first order}

Let $D : \Dom(D)\subset H_1 \to H_2$ be closed and densely defined, and let $D^* : \Dom(D^*) \subset H_2 \to H_1$ be the adjoint. Write
$$
E_0 = \begin{array}{c} H_1 \\ \oplus \\ H_2 \end{array} \textup{ and }
E_1 = \begin{array}{c} \Dom(D) \\ \oplus \\ \Dom(D^*) \end{array} \hookrightarrow E_0.
$$
We assume that $D$ (and therefore also $D^*$) is Fredholm, and that the embeddings for both domains $\Dom(D) \hookrightarrow H_1$ and $\Dom(D^*) \hookrightarrow H_2$ are compact. Consider then
$$
{\mathscr D}_{\wedge} = x^{-1}\biggl[\begin{bmatrix} 1 & 0 \\ 0 & -1 \end{bmatrix}(xD_x) + \begin{bmatrix} 0 & D^* \\ D & 0 \end{bmatrix}\biggr] : C_c^{\infty}(\R_+;E_1) \subset L^2(\R_+;E_0) \to L^2(\R_+;E_0)
$$
with indicial family
$$
{\mathscr D}(\sigma) = \begin{bmatrix} \sigma & D^* \\ D & -\sigma \end{bmatrix} : E_1 \subset E_0 \to E_0, \quad \sigma \in \C.
$$
Now ${\mathscr D}(\sigma)$ satisfies the assumptions previously stated for indicial families with $\mu = 1$, including \eqref{HIntersection} with $\Lambda = {\mathscr D}(0)$; see Lemma~\ref{FirstOrderIndicialAssumption} for the required estimates. Therefore the conclusions summarized above hold for ${\mathscr D}_{\wedge}$, and by Lemma~\ref{SpectralFlowIndex} we have
\begin{equation}\label{IndexEqualsSignature}
\sig\bigl(\Dom_{\max}({\mathscr D}_{\wedge})/\Dom_{\min}({\mathscr D}_{\wedge}),[\cdot,\cdot]\bigr) = \ind[D : \Dom(D) \subset H_1 \to H_2].
\end{equation}
The only real indicial root is $\sigma_0 = 0$, and after possibly introducing a sufficiently small scaling parameter $t > 0$ and replacing ${\mathscr D}_{\wedge}$ by
$$
{\mathscr D}_{\wedge,t} = x^{-1}\biggl[t\begin{bmatrix} 1 & 0 \\ 0 & -1 \end{bmatrix}(xD_x) + \begin{bmatrix} 0 & D^* \\ D & 0 \end{bmatrix}\biggr]
$$
we have
\begin{gather*}
\Dom_{\min}({\mathscr D}_{\wedge,t}) = x^{\frac{1}{2}}H^1_b(\R_+;E_0)\cap x^{\frac{1}{2}}L^2_b(\R_+;E_1)\cap L^2(\R_+;E_0), \\
\Dom_{\max}({\mathscr D}_{\wedge,t}) = \Dom_{\min}({\mathscr D}_{\wedge,t}) \oplus \E_{0}({\mathscr D}_t).
\end{gather*}
In this case $\E_{0}({\mathscr D}_t) = \E_{0}({\mathscr D})$ is also independent of $t > 0$, and we have
$$
\E_{0}({\mathscr D}) = \Bigl\{u=\omega\begin{bmatrix} k \\ k^* \end{bmatrix};\; k \in \ker(D),\; k^* \in \ker(D^*)\Bigr\}.
$$
This follows from \eqref{Esigma0pdef} in view of
\begin{align*}
{\mathscr D}(\sigma)^{-1} &= \sigma\begin{bmatrix} 1 & 0 \\ 0 & -1 \end{bmatrix}[{\mathscr D}(0)^2 + \sigma^2]^{-1} + {\mathscr D}(0)[{\mathscr D}(0)^2 + \sigma^2]^{-1} \\
&= \begin{bmatrix} \Pi_{D} & 0 \\ 0 & -\Pi_{D^*} \end{bmatrix}\frac{1}{\sigma} + \textup{holomorphic}
\end{align*}
near $\sigma = 0$, where $\Pi_{D} : H_1 \to \ker(D)$ and $\Pi_{D^*} : H_2 \to \ker(D^*)$ are the orthogonal projections onto the kernels of $D$ and $D^*$, respectively. For sufficiently small $t > 0$ a brief calculation shows that the adjoint pairing is given by
$$
\Bigl[\omega\begin{bmatrix} k_1 \\ k_1^* \end{bmatrix},\omega\begin{bmatrix} k_2 \\ k_2^* \end{bmatrix}\Bigr]_{{\mathscr D}_{\wedge,t}} = t\bigl(\langle k_1,k_2 \rangle_{H_1} - \langle k_1^*,k_2^*\rangle_{H_2}\bigr)
$$
for $k_j \in \ker(D)$ and $k_j^* \in \ker(D^*)$, $j=1,2$, which provides a direct justification for \eqref{IndexEqualsSignature} for ${\mathscr D}_{\wedge,t}$ (for small $t > 0$) that does not rely on the spectral flow.

\begin{lemma}\label{FirstOrderIndicialAssumption}
For $(\lambda,\sigma) \in \R^2$ write $z = \sigma + i\lambda \in \C$ and consider
$$
{\mathbf D}(z) = {\mathscr D}(\sigma) + i\lambda = \begin{bmatrix}
z & D^* \\
D & -\overline{z}
\end{bmatrix} :
E_1 \subset E_0 \to E_0.
$$
Then ${\mathbf D}(z)$ is invertible for all $z \in \C\setminus\{0\}$, and
$$
\sup\limits_{|z|\geq 1}\{|z|\cdot \|{\mathbf D}(z)^{-1}\|_{\L(E_0)} + \|{\mathbf D}(z)^{-1}\|_{\L(E_0,E_1)}\} < \infty.
$$
\end{lemma}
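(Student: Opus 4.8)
The plan is to reduce everything to one elementary norm identity on $E_1$. For $u=(u_1,u_2)\in E_1=\Dom(D)\oplus\Dom(D^*)$ I would expand
$$
\|{\mathbf D}(z)u\|_{E_0}^2=\|zu_1+D^*u_2\|_{H_1}^2+\|Du_1-\overline{z}u_2\|_{H_2}^2
$$
and note that the two mixed terms add up to $2\Re\bigl(z\langle u_1,D^*u_2\rangle_{H_1}\bigr)-2\Re\bigl(z\langle Du_1,u_2\rangle_{H_2}\bigr)$, which vanishes because $\langle Du_1,u_2\rangle_{H_2}=\langle u_1,D^*u_2\rangle_{H_1}$ by the very definition of the adjoint. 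This is the place where the specific structure of ${\mathscr D}(\sigma)$ enters: off-diagonal entries $D$, $D^*$ and opposite signs on the diagonal; it is the elementary counterpart of the formal computation ${\mathbf D}(z)^*{\mathbf D}(z)=\operatorname{diag}(D^*D+|z|^2,\,DD^*+|z|^2)$. What is left is the clean identity
$$
\|{\mathbf D}(z)u\|_{E_0}^2=|z|^2\|u\|_{E_0}^2+\|Du_1\|_{H_2}^2+\|D^*u_2\|_{H_1}^2,\qquad u=(u_1,u_2)\in E_1,
$$
from which everything follows.

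For $z\neq0$ the identity shows that ${\mathbf D}(z)\colon E_1\to E_0$ is injective and bounded below, hence has closed range — here one uses completeness of $E_1$ in the graph norms, which holds because $D$ is closed and $D^*$ is automatically closed. The same reasoning applies to ${\mathbf D}(\overline{z})$. To obtain surjectivity I would identify the orthogonal complement of the range in $E_0$: if $f=(f_1,f_2)$ satisfies $\langle {\mathbf D}(z)u,f\rangle_{E_0}=0$ for all $u\in E_1$, then testing with $u=(u_1,0)$ shows $f_2\in\Dom(D^*)$ and $D^*f_2=-\overline{z}f_1$, and testing with $u=(0,u_2)$ shows $f_1\in\Dom(D)$ and $Df_1=zf_2$; together these say ${\mathbf D}(\overline{z})f=0$, whence $f=0$. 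Equivalently, this just records that ${\mathbf D}(z)^*={\mathbf D}(\overline{z})$ on $E_1$. Thus ${\mathbf D}(z)\colon E_1\to E_0$ is bijective for every $z\in\C\setminus\{0\}$.

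Finally, the two estimates are read off the identity. For $|z|\ge1$ it gives $\|{\mathbf D}(z)u\|_{E_0}^2\ge|z|^2\|u\|_{E_0}^2$ and $\|{\mathbf D}(z)u\|_{E_0}^2\ge\|u\|_{E_0}^2+\|Du_1\|_{H_2}^2+\|D^*u_2\|_{H_1}^2=\|u\|_{E_1}^2$; substituting $u={\mathbf D}(z)^{-1}f$ yields $\|{\mathbf D}(z)^{-1}\|_{\L(E_0)}\le|z|^{-1}$ and $\|{\mathbf D}(z)^{-1}\|_{\L(E_0,E_1)}\le1$, which is the asserted bound. I do not expect a real obstacle: the only slightly delicate point is the domain bookkeeping in the surjectivity step, and it seems worth remarking that the proof uses only that $D$ is closed and densely defined — the Fredholm property and the compactness of the embeddings, in force throughout this subsection, are not needed for this particular lemma, although they are of course essential for the statements that invoke it.
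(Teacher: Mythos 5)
Your proof is correct. The identity $\|{\mathbf D}(z)u\|_{E_0}^2=|z|^2\|u\|_{E_0}^2+\|Du_1\|_{H_2}^2+\|D^*u_2\|_{H_1}^2$ is exactly the quadratic-form incarnation of the computation the paper performs at the operator level, namely ${\mathbf D}(z)^*{\mathbf D}(z)={\mathbf D}(z){\mathbf D}(z)^*={\mathscr D}(0)^2+|z|^2$; so the underlying mechanism (cancellation of the cross terms via $\langle Du_1,u_2\rangle=\langle u_1,D^*u_2\rangle$) is the same. The execution differs in two places. For invertibility, the paper invokes invertibility of ${\mathscr D}(0)^2+|z|^2$ (von Neumann / spectral theorem for the selfadjoint ${\mathscr D}(0)$) and writes down the explicit inverse ${\mathbf D}(z)^{-1}={\mathbf D}(z)^*[{\mathbf D}(z){\mathbf D}(z)^*]^{-1}$, whereas you prove bijectivity by closed range plus an orthogonal-complement computation identifying $(\operatorname{ran}{\mathbf D}(z))^{\perp}$ with $\ker{\mathbf D}(\overline{z})$; your route is more elementary but requires the domain bookkeeping you flag (in particular $D^{**}=D$, available since $D$ is closed and densely defined). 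For the estimates, the paper decomposes the explicit inverse and bounds each piece by the spectral theorem, while you read the bounds $\|{\mathbf D}(z)^{-1}\|_{\L(E_0)}\le|z|^{-1}$ and $\|{\mathbf D}(z)^{-1}\|_{\L(E_0,E_1)}\le 1$ directly off the lower bound for the quadratic form; this gives cleaner constants with less machinery. Your closing remark that neither the Fredholm property nor compactness of the embeddings is used in this lemma is also accurate of the paper's proof.
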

\begin{proof}
We have ${\mathbf D}(z)^* = {\mathbf D}(\overline{z})$, and
$$
{\mathbf D}(z)^*{\mathbf D}(z) = {\mathbf D}(z){\mathbf D}(z)^* = \begin{bmatrix} |z|^2 + D^*D & 0 \\ 0 & |z|^2 + DD^* \end{bmatrix} = {\mathscr D}(0)^2 + |z|^2.
$$
This operator is invertible for $z \in \C \setminus \{0\}$, and consequently ${\mathbf D}(z)$ is invertible with
$$
{\mathbf D}(z)^{-1} = {\mathbf D}(z)^*[{\mathbf D}(z){\mathbf D}(z)^*]^{-1} = [\overline{z}\Pi_1 - z\Pi_2][{\mathscr D}(0)^2 + |z|^2]^{-1} + {\mathscr D}(0)[{\mathscr D}(0)^2 + |z|^2]^{-1},
$$
where $\Pi_j : E_0 \to H_j \subset E_0$ is the orthogonal projection, $j=1,2$. In view of ${\mathscr D}(0)[\overline{z}\Pi_1 - z\Pi_2] = [\overline{z}\Pi_2 - z\Pi_1]{\mathscr D}(0)$ we have
$$
{\mathscr D}(0){\mathbf D}(z)^{-1}  = [\overline{z}\Pi_2 - z\Pi_1]{\mathscr D}(0)[{\mathscr D}(0)^2 + |z|^2]^{-1} + {\mathscr D}(0)^2[{\mathscr D}(0)^2 + |z|^2]^{-1}.
$$
The Spectral Theorem implies
$$
\sup\limits_{|z|\geq 1}\{\|{\mathscr D}(0)^2[{\mathscr D}(0)^2 + |z|^2]^{-1}\| + \|{z\mathscr D}(0)[{\mathscr D}(0)^2 + |z|^2]^{-1}\| + \|z^2[{\mathscr D}(0)^2 + |z|^2]^{-1}\|\} < \infty,
$$
where $\|\cdot\| = \|\cdot\|_{\L(E_0)}$. The lemma now follows.
\end{proof}

\begin{lemma}\label{SpectralFlowIndex}
We have
$$
\ind[D : \Dom(D) \subset H_1 \to H_2] = \SF\bigl[{\mathscr D}(\sigma) : E_1 \subset E_0 \to E_0,\; \sigma \in \R\bigr].
$$
\end{lemma}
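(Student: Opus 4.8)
The plan is to split the path $\R \ni \sigma \mapsto \mathscr D(\sigma)$ along a $\sigma$-independent orthogonal decomposition of $E_0$ into a finite-dimensional part that carries all of the spectral flow and an infinite-dimensional part that carries none. Since $D$ is Fredholm, $\ker D$ and $\ker D^*$ are finite-dimensional and $D$, $D^*$ have closed range, so that $(\ker D)^\perp = \overline{\mathrm{ran}\, D^*}$ and $(\ker D^*)^\perp = \overline{\mathrm{ran}\, D}$. Put
$$
E_0' = \ker D \oplus \ker D^* \subset E_1, \qquad E_0'' = (\ker D)^\perp \oplus (\ker D^*)^\perp,
$$
so that $E_0 = E_0' \oplus E_0''$ orthogonally and $E_1 = E_0' \oplus (E_1 \cap E_0'')$. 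The first step is to verify that this decomposition reduces $\mathscr D(\sigma)$ for every $\sigma$: if $u \in \ker D$ and $v \in \ker D^*$ then $Du = 0$ and $D^* v = 0$, hence $\mathscr D(\sigma)(u,v)^{T} = (\sigma u,\,-\sigma v)^{T} \in E_0'$; and since $\mathrm{ran}\, D \subseteq (\ker D^*)^\perp$ and $\mathrm{ran}\, D^* \subseteq (\ker D)^\perp$, the operator $\mathscr D(\sigma)$ maps $E_1 \cap E_0''$ into $E_0''$. Thus $\mathscr D(\sigma) = \mathscr D'(\sigma) \oplus \mathscr D''(\sigma)$ with respect to the fixed orthogonal splitting $E_0 = E_0' \oplus E_0''$, and therefore $\SF[\mathscr D(\cdot)] = \SF[\mathscr D'(\cdot)] + \SF[\mathscr D''(\cdot)]$.

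On the finite-dimensional space $E_0'$ the operator $\mathscr D'(\sigma)$ is multiplication by $\sigma$ on $\ker D$ and by $-\sigma$ on $\ker D^*$; as $\sigma$ increases through $0$, exactly $\dim\ker D$ eigenvalues move from negative to positive values and $\dim\ker D^*$ eigenvalues move from positive to negative, whence $\SF[\mathscr D'(\cdot)] = \dim\ker D - \dim\ker D^* = \ind D$. On $E_0''$ the operator $\mathscr D''(\sigma)$ is selfadjoint, being a reducing restriction of the selfadjoint operator $\mathscr D(\sigma)$, and it is invertible for every $\sigma \in \R$: for $\sigma \neq 0$ this is Lemma~\ref{FirstOrderIndicialAssumption} restricted to the real axis, and for $\sigma = 0$ it follows from $\ker\mathscr D(0) = E_0'$ together with $E_0' \cap E_0'' = 0$, since an injective selfadjoint Fredholm operator is invertible. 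A path of selfadjoint Fredholm operators that stays invertible has zero spectral flow, so $\SF[\mathscr D''(\cdot)] = 0$, and the lemma follows.

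The computations above amount to bookkeeping; the one point that needs genuine care is the orientation convention for the spectral flow. With the convention under which \eqref{SignatureSpectralFlow1} holds — a branch of eigenvalues crossing $0$ transversally from below to above counts $+1$ — the model computation on $E_0'$ produces $+\ind D$ rather than $-\ind D$. This sign is consistent with the explicit evaluation of the adjoint pairing on $\E_0(\mathscr D)$ recorded earlier in this section, whose signature is $\dim\ker D - \dim\ker D^*$; combined with \eqref{SignatureSpectralFlow1} it also yields \eqref{IndexEqualsSignature}. (The compactness of the domain embeddings is not used in this lemma itself; it enters only to place $\mathscr D(\sigma)$ within the standing assumptions of this section.)
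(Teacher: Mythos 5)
Your proof is correct and follows essentially the same route as the paper's: decompose along $\ker\mathscr D(0)=\ker D\oplus\ker D^*$, read off the spectral flow $\dim\ker D-\dim\ker D^*$ from the eigenvalues $\pm\sigma$ on that finite-dimensional block, and observe that the complementary block stays invertible for all $\sigma\in\R$ and so contributes nothing. The extra care you take with invertibility of the complementary block at $\sigma=0$ and with the sign convention is a welcome elaboration, not a different method.
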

\begin{proof}
Let ${\mathcal K} = \ker({\mathscr D}(0)) = \ker(D) \oplus \ker(D^*)$. Then
$$
{\mathscr D}(\sigma) = \begin{bmatrix} {\mathscr D}_{{\mathcal K}}(\sigma) & 0 \\ 0 & {\mathscr D}_{{\mathcal K}^{\perp}}(\sigma) \end{bmatrix} : \begin{array}{c} {\mathcal K} \\ \oplus \\ {\mathcal K}^{\perp}\cap E_1 \end{array} \to \begin{array}{c} {\mathcal K} \\ \oplus \\ {\mathcal K}^{\perp} \end{array}, \quad \sigma \in \R.
$$
Now ${\mathscr D}_{{\mathcal K}}(\sigma) : {\mathcal K} \to {\mathcal K}$, $\sigma \neq 0$, has eigenvalues $\sigma,-\sigma$ of multiplicities $\dim\ker(D)$ and $\dim\ker(D^*)$, respectively, and ${\mathscr D}_{{\mathcal K}^{\perp}}(\sigma)$ is invertible for all $\sigma \in \R$. Thus
\begin{align*}
 \ind D &= \dim\ker(D) - \dim\ker(D^*) \\
&= \SF\bigl[{\mathscr D}_{{\mathcal K}}(\sigma) : {\mathcal K}  \to {\mathcal K},\; \sigma \in \R\bigr] \\
&= \SF\bigl[{\mathscr D}(\sigma) : E_1 \subset E_0 \to E_0,\; \sigma \in \R\bigr].
\end{align*}
\end{proof}


\section{The null-cobordism theorem}\label{sec-Theorem}

\noindent
We now revisit the setting discussed in the introduction to prove the null-cobordism theorem. We make the following product type assumptions on the geometry and the operator:

Let $(M,g)$ be a Riemannian manifold, and let $U = U(Y) \subset M$ be an open subset that is isometric to $(0,\varepsilon) \times Y$ with product metric $dx^2 + g_Y$ for some $\varepsilon > 0$, where $(Y,g_Y)$ is another Riemannian manifold. Let $\E \to M$ be a Hermitian vector bundle such that $\E\big|_{U(Y)} \cong \pi_Y^*E$ isometrically, where $E \to Y$ is a Hermitian vector bundle, and $\pi_Y : (0,\varepsilon) \times Y \to Y$ is the canonical projection. Let
$$
A : C_c^{\infty}(M;\E) \to C_c^{\infty}(M;\E)
$$
be an elliptic differential operator of order $\mu \geq 1$ that is symmetric with respect to the inner product induced by the Riemannian and Hermitian metrics, and suppose that $A$ is in $U(Y)$ of the form
$$
A \cong A_{\wedge} = x^{-1}\sum\limits_{j=0}^{\mu}a_j(y,D_y)(xD_x)^j : C_c^{\infty}((0,\varepsilon)\times Y;\pi_Y^*E) \to C_c^{\infty}((0,\varepsilon)\times Y;\pi_Y^*E),
$$
where $a_j(y,D_y) \in \Diff^{\mu-j}(Y;E)$. Let
$$
p(\sigma) = \sum\limits_{j=0}^{\mu}a_j(y,D_y)\sigma^j : C_c^{\infty}(Y;E) \to C_c^{\infty}(Y;E), \; \sigma \in \C,
$$
be the indicial family. We assume that $p(\sigma) : E_1 \subset E_0 \to E_0$ satisfies the assumptions stated in Section~\ref{sec-indicialoperators} with $E_0 = L^2(Y;E)$ and some domain
$$
H^{\mu}_{\textup{comp}}(Y;E) \subset E_1 \subset H^{\mu}_{\textup{loc}}(Y;E).
$$
We also assume that the embedding $E_1 \hookrightarrow E_0$ is compact, and that $p(\sigma) : E_1 \to E_0$ is invertible for $0 < |\Im(\sigma)| \leq \frac{1}{2}$; as explained in Section~\ref{sec-indicialoperators}, the latter can generally be achieved by introducing a scaling parameter (which for geometric operators typically corresponds to scaling the metric). The closed extensions of the indicial operator
$$
A_{\wedge} : C_c^{\infty}(\R_+;E_1) \subset L^2(\R_+\times Y;\pi_Y^*E) \to L^2(\R_+\times Y;\pi_Y^*E)
$$
are then described as explained in Section~\ref{sec-indicialoperators}. Let
$$
A_{\min} : \Dom_{\min}(A) \subset L^2(M;\E) \to L^2(M;\E)
$$
be a closed symmetric extension of $A : C_c^{\infty}(M;\E) \subset L^2(M;\E) \to L^2(M;\E)$, and let $A_{\max} : \Dom_{\max}(A) \subset L^2(M;\E) \to L^2(M;\E)$ be the adjoint; as discussed in the introduction, $A_{\min}$ is generally not the minimal extension of $A$ from $C_c^{\infty}(M;\E)$, and thus $A_{\max}$ is not the largest $L^2$-based closed extension. By elliptic regularity we have
$$
H^{\mu}_{\textup{comp}}(M;\E) \subset \Dom_{\min}(A) \subset \Dom_{\max}(A) \subset H^{\mu}_{\textup{loc}}(M;\E).
$$
By a cut-off function we mean any function $\omega \in C_c^{\infty}([0,\varepsilon))$ such that $\omega \equiv 1$ near $x=0$, and we consider $\omega$ a function on $M$ supported in $U(Y)$. We make the following localization and compatibility assumptions between $A$ and $A_{\wedge}$:
\begin{itemize}
\item For every cut-off function $\omega$, multiplication by $1-\omega$ gives a continuous operator $\Dom_{\max}(A) \to \Dom_{\min}(A)$. We also assume that $1 - \omega : \Dom_{\min}(A) \to L^2(M;\E)$ is compact.
\item For every cut-off function $\omega$, multiplication by $\omega$ gives continuous operators $\Dom_{\min}(A) \to \Dom_{\min}(A_{\wedge})$ and $\Dom_{\min}(A_{\wedge}) \to \Dom_{\min}(A)$.
\end{itemize}

\noindent
To make sense of the mappings above note that
$$
M \supset U(Y) \cong (0,\varepsilon)\times Y \subset \R_+ \times Y,
$$
which allows transitioning both ways between functions on $M$ supported in $U(Y)$ and functions on $\R_+\times Y$ supported in $(0,\varepsilon)\times Y$. We will use these transitions freely in what follows.

\begin{proposition}\label{unitaryequiv}
Let $\omega \in C_c^{\infty}([0,\varepsilon))$ be any cut-off function. The map
$$
\Dom_{\max}(A)/\Dom_{\min}(A) \ni u + \Dom_{\min}(A) \longmapsto \omega u + \Dom_{\min}(A_{\wedge}) \in \Dom_{\max}(A_{\wedge})/\Dom_{\min}(A_{\wedge})
$$
is well-defined, and induces a unitary equivalence between the indefinite inner product spaces
$$
\bigl(\Dom_{\max}(A)/\Dom_{\min}(A),[\cdot,\cdot]_A\bigr) \cong \bigl(\Dom_{\max}(A_{\wedge})/\Dom_{\min}(A_{\wedge}),[\cdot,\cdot]_{A_{\wedge}}\bigr).
$$
\end{proposition}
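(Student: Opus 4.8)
The plan is to exhibit, alongside the map $\Phi\colon u+\Dom_{\min}(A)\mapsto\omega u+\Dom_{\min}(A_\wedge)$ of the statement, the candidate inverse $\Psi\colon v+\Dom_{\min}(A_\wedge)\mapsto\omega v+\Dom_{\min}(A)$ built the same way via the identification $U(Y)\cong(0,\varepsilon)\times Y$, and then to check in turn that both are well defined, that they are mutually inverse, and that they intertwine the adjoint pairings. That $\omega$ sends $\Dom_{\min}(A)$ into $\Dom_{\min}(A_\wedge)$ and $\Dom_{\min}(A_\wedge)$ into $\Dom_{\min}(A)$ is part of the standing compatibility assumptions, so for well-definedness it remains only to see that $\omega$ sends the two maximal domains into each other. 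For $u\in\Dom_{\max}(A)$ one has on $U(Y)$, where $A=A_\wedge$, the identity $A_\wedge(\omega u)=\omega A_{\max}u+[A_\wedge,\omega]u$; the first term is visibly $L^2$, while $[A_\wedge,\omega]$ is a differential operator of order $\le\mu-1$ whose coefficients are supported in $\supp\omega'\times Y$, a region bounded away from $x=0$ and $x=\varepsilon$. There I would invoke interior elliptic regularity for $A$ on $M$ — this is the place where compactness of the embedding $E_1\hookrightarrow E_0$, equivalently of $\overline Y$, is needed to rule out loss of control along $Y$ — to get $u$ restricted to that region in the order-$\mu$ wedge Sobolev space, with norm bounded by a constant times $\|u\|_{A_{\max}}$, hence $[A_\wedge,\omega]u\in L^2(\R_+\times Y)$ and $\omega u\in L^2$ with $\|\omega u\|_{A_{\wedge,\max}}\le C\|u\|_{A_{\max}}$. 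That $\omega u$ really lies in $\Dom_{\max}(A_\wedge)=\Dom(A_{\wedge,\min}^*)$ — i.e.\ that $\langle A_{\wedge,\min}\psi,\omega u\rangle=\langle\psi,\omega A_{\max}u+[A_\wedge,\omega]u\rangle$ for $\psi\in C_c^\infty(\R_+;E_1)$ — I would obtain by first replacing $\psi$ by $\omega_1\psi$ with $\omega_1$ a cut-off equal to $1$ near $\supp\omega$ (the complementary part pairs to zero, since $[A_\wedge,1-\omega_1]$ is supported off $\supp\omega$), transferring to $M$ where $\omega_1\psi\in\Dom_{\min}(A)$, using $u\in\Dom(A_{\min}^*)$, and integrating by parts in the commutator term; the boundary contributions at the singular strata of $\overline Y$ drop out precisely because the lateral boundary conditions encoded in $E_1$ are selfadjoint for $A$ — this is assumption (ii), $p(\overline\sigma)^*=p(\sigma)$ with common domain $E_1$. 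For the opposite inclusion $\omega\colon\Dom_{\max}(A_\wedge)\to\Dom_{\max}(A)$ I would instead use the explicit splitting $\Dom_{\max}(A_\wedge)=\Dom_{\min}(A_\wedge)\oplus\bigoplus_{\sigma_0\in\spec_b(p)\cap\R}\E_{\sigma_0}(p)$ from Section~\ref{sec-indicialoperators}: $\omega$ sends the first summand into $\Dom_{\min}(A)$ by compatibility, while a generator $\omega_0\sum_le_l\log^l(x)x^{i\sigma_0}$ of $\E_{\sigma_0}(p)$ is sent to $(\omega\omega_0)\sum_le_l\log^l(x)x^{i\sigma_0}$, which is supported in $U(Y)$, vanishes near every singular stratum of $\overline M$ other than those of $\overline Y$, and has $A$ applied to it in $L^2$ (it agrees with the generator near $x=0$, where $A_\wedge$ of it is $L^2$, and is smooth with compact support away from $x=0$), so it lies in $\Dom_{\max}(A)$.

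For the remaining two points the arguments are short. Since $\omega^2$ is again a cut-off in the sense of the paper, the assumption $(1-\omega^2)\colon\Dom_{\max}(A)\to\Dom_{\min}(A)$ and the analogous statement $(1-\omega^2)\colon\Dom_{\max}(A_\wedge)\to\Dom_{\min}(A_\wedge)$ recalled in Section~\ref{sec-indicialoperators} give $\Psi\Phi(u+\Dom_{\min}(A))=\omega^2u+\Dom_{\min}(A)=u+\Dom_{\min}(A)$ and, symmetrically, $\Phi\Psi=\mathrm{id}$; hence $\Phi$ is a linear isomorphism, and since its target is finite dimensional (Section~\ref{sec-indicialoperators}) so is its source. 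Independence of the cut-off needs no separate argument: for two cut-offs $\omega,\tilde\omega$ the product $\omega\tilde\omega$ is a cut-off, so $\Psi_\omega$ is a two-sided inverse of $\Phi_{\tilde\omega}$ as well, forcing $\Phi_{\tilde\omega}=\Phi_\omega$. Finally, $[\cdot,\cdot]_A$ vanishes whenever one of its arguments lies in $\Dom_{\min}(A)$, and $u-\omega u=(1-\omega)u\in\Dom_{\min}(A)$, so every class has the representative $\omega u$ supported in $U(Y)$; on $U(Y)\cong(0,\varepsilon)\times Y\subset\R_+\times Y$ the operator $A$ coincides with $A_\wedge$ and the $L^2$ inner products agree, so $[\omega u,\omega v]_A=[\omega u,\omega v]_{A_\wedge}$ for $u,v\in\Dom_{\max}(A)$, which is exactly the pairing of $\Phi(u+\Dom_{\min}(A))$ with $\Phi(v+\Dom_{\min}(A))$. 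Thus $\Phi$ is an isometric linear isomorphism of finite-dimensional nondegenerate indefinite inner product spaces, i.e.\ a unitary equivalence.

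The one genuinely substantial step is the max-to-max half of the well-definedness, namely that $\omega u\in\Dom_{\max}(A_\wedge)$ for $u\in\Dom_{\max}(A)$. It has two ingredients: an interior elliptic estimate on the $x$-compact region $\supp\omega'\times Y$ to control $[A_\wedge,\omega]u$ in $L^2$ — this is where compactness of $\overline Y$ (equivalently $E_1\hookrightarrow E_0$) is used — and an integration-by-parts identification of the distributional action of $A_\wedge$ on $\omega u$, whose boundary terms along the singular strata of $\overline Y$ are annihilated by the selfadjointness of $E_1$ for the indicial family, assumption (ii). The reverse inclusion $\omega\colon\Dom_{\max}(A_\wedge)\to\Dom_{\max}(A)$ is comparatively soft given the explicit description of $\Dom_{\max}(A_\wedge)$, and the remaining steps — mutual inverseness via the cut-off-squared identities, and the isometry, which is almost tautological once one passes to representatives supported in $U(Y)$ — are routine.
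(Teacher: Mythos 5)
Your overall skeleton (two maps $\Phi,\Psi$ given by multiplication by $\omega$, mutual inverseness via $(1-\omega^2)u\in\Dom_{\min}$, and agreement of the pairings on representatives supported in $U(Y)$) matches the paper, and those parts are fine. But the step you yourself identify as the substantial one — $u\in\Dom_{\max}(A)\Rightarrow\omega u\in\Dom_{\max}(A_{\wedge})$ — is handled by an argument that both has a gap and misses the mechanism the hypotheses provide. Your commutator/interior-regularity route is not justified in this generality: the region $\supp\omega'\times Y$ is \emph{not} compactly contained in $M$ (it reaches the singular strata of $\overline{Y}$), so interior elliptic regularity gives only $H^{\mu}_{\textup{loc}}$ control there, not membership of $[A_{\wedge},\omega]u$ in $L^2(\R_+\times Y)$ with a bound by $\|u\|_{A_{\max}}$; and compactness of $E_1\hookrightarrow E_0$ does nothing to supply the missing estimate (in the paper that compactness is used only later, in the proof of Theorem~\ref{CobordismInvarianceProductType}, to make $\Dom_{\min}(A)\hookrightarrow L^2(M;\E)$ compact — it plays no role in Proposition~\ref{unitaryequiv}). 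The whole commutator analysis is in fact unnecessary: the standing localization assumption that $1-\omega:\Dom_{\max}(A)\to\Dom_{\min}(A)$ is continuous already gives $\omega u=u-(1-\omega)u\in\Dom_{\max}(A)$, hence $A_{\max}(\omega u)\in L^2$ supported in $\supp\omega$, for free. What remains is a purely formal duality check that $\omega u\in\Dom(A_{\wedge,\min}^*)$: test against $\phi\in\Dom_{\min}(A_{\wedge})$, split $\phi=\tilde\omega\phi+(1-\tilde\omega)\phi$ with $\tilde\omega\equiv1$ near $\supp\omega$, use $\tilde\omega\phi\in\Dom_{\min}(A)$ (compatibility of minimal domains) and locality of $A,A_{\wedge}$, and apply the definition of $A_{\max}=A_{\min}^*$. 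No integration by parts and no boundary terms at singular strata appear; your appeal to assumption (ii) to kill such terms is solving a problem the correct argument never creates.

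There is a second, related gap in your reverse direction. For a generator $v=(\omega\omega_0)\sum_l e_l\log^l(x)x^{i\sigma_0}$ of $\E_{\sigma_0}(p)$ you argue that $Av\in L^2$ as a differential expression and conclude $v\in\Dom_{\max}(A)$. But $\Dom_{\max}(A)=\Dom(A_{\min}^*)$ where $A_{\min}$ is a prescribed closed symmetric extension that is in general strictly larger than the closure of $A$ from $C_c^{\infty}(M;\E)$; consequently $\Dom_{\max}(A)$ is in general a \emph{proper} subspace of the distributional maximal domain, as the paper emphasizes in the introduction. Having $Av\in L^2$ distributionally therefore does not suffice; you must verify $\langle A\phi,v\rangle=\langle\phi,Av\rangle$ for all $\phi\in\Dom_{\min}(A)$, not just for $\phi\in C_c^{\infty}$. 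The paper does this by the same $\tilde\omega$-splitting duality computation as in the forward direction (using that $(1-\omega)u\in\Dom_{\min}(A_{\wedge})$ and $\tilde\omega\phi\in\Dom_{\min}(A_{\wedge})$), applied to all of $\Dom_{\max}(A_{\wedge})$ at once rather than generator by generator. With these two repairs your proof closes; as written, both halves of the well-definedness claim are incomplete.
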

\begin{proof}
We first prove that multiplication by $\omega$ gives a well-defined map
$$
\Dom_{\max}(A) \ni u \mapsto \omega u \in \Dom_{\max}(A_{\wedge}).
$$
Note that with $u$ also $\omega u \in \Dom_{\max}(A)$ by our localization assumption. Now pick another cut-off function $\tilde{\omega} \in C_c^{\infty}([0,\varepsilon))$ such that $\tilde{\omega} \equiv 1$ in a neighborhood of $\supp(\omega)$. Let $\phi \in \Dom_{\min}(A_{\wedge})$ be arbitrary, and write $\phi = \tilde{\omega}\phi + (1-\tilde{\omega})\phi$. Since
$$
\Dom_{\min}(A_{\wedge}) = x^{\frac{1}{2}}{\mathscr H}(\R_+;E_1)\cap L^2(\R_+;E_0)
$$
as a consequence of our assumptions we have that both $\tilde{\omega}\phi,\, (1-\tilde{\omega})\phi \in \Dom_{\min}(A_{\wedge})$, see Section~\ref{sec-indicialoperators}. We also have $\tilde{\omega}\phi \in \Dom_{\min}(A)$ by our localization and compatibility assumption with respect to the minimal domains. Using the locality of the differential operators $A_{\wedge}$ and $A$ we get
\begin{align*}
\langle A_{\wedge}\phi,\omega u \rangle &= \langle A_{\wedge}(\tilde{\omega}\phi),\omega u \rangle = \langle A(\tilde{\omega}\phi),\omega u \rangle = \langle \tilde{\omega}\phi,A_{\max}(\omega u) \rangle \\
&= \langle \phi,\tilde{\omega}A_{\max}(\omega u) \rangle = \langle \phi,A_{\max}(\omega u) \rangle.
\end{align*}
As this is valid for all $\phi \in \Dom_{\min}(A_{\wedge})$ we see that $\omega u \in \Dom_{\max}(A_{\wedge})$ with $A_{\wedge,\max}(\omega u)$ given as the restriction of $A_{\max}(\omega u)$ to $U(Y)$ and extended trivially to ${\mathbb R}_+\times Y$.
As for $u \in \Dom_{\min}(A)$ we also have $\omega u \in \Dom_{\min}(A_{\wedge})$ by assumption, we thus obtain that the map
$$
\Dom_{\max}(A)/\Dom_{\min}(A) \ni u + \Dom_{\min}(A) \longmapsto \omega u + \Dom_{\min}(A_{\wedge}) \in \Dom_{\max}(A_{\wedge})/\Dom_{\min}(A_{\wedge})
$$
is well-defined.

Conversely, multiplication by $\omega$ likewise gives a well-defined map
$$
\Dom_{\max}(A_{\wedge}) \ni u \mapsto \omega u \in \Dom_{\max}(A).
$$
Note that if $u \in \Dom_{\max}(A_{\wedge})$ then $\omega u \in \Dom_{\max}(A_{\wedge})$ and $(1-\omega)u \in \Dom_{\min}(A_{\wedge})$ by Section~\ref{sec-indicialoperators}. Now let $\tilde{\omega} \in C_c^{\infty}([0,\varepsilon))$ be such that $\tilde{\omega} \equiv 1$ in a neighborhood of $\supp(\omega)$. Let $\phi \in \Dom_{\min}(A)$ be arbitrary, and write $\phi = \tilde{\omega}\phi + (1-\tilde{\omega})\phi$; by the localization and compatibility assumptions both terms are in $\Dom_{\min}(A)$, and we also have $\tilde{\omega}\phi \in \Dom_{\min}(A_{\wedge})$.
We get
\begin{align*}
\langle A\phi,\omega u \rangle &= \langle A(\tilde{\omega}\phi),\omega u \rangle = \langle A_{\wedge}(\tilde{\omega}\phi),\omega u \rangle = \langle \tilde{\omega}\phi,A_{\wedge,\max}(\omega u) \rangle \\
&= \langle \phi,\tilde{\omega}A_{\wedge,\max}(\omega u) \rangle = \langle \phi,A_{\wedge,\max}(\omega u) \rangle.
\end{align*}
This shows that $\omega u \in \Dom_{\max}(A)$ with $A_{\max}(\omega u)$ given by $A_{\wedge,\max}(\omega u)$ in $U(Y)$ and extended trivially to $M$. We thus obtain a map
$$
\Dom_{\max}(A_{\wedge})/\Dom_{\min}(A_{\wedge}) \ni u + \Dom_{\min}(A_{\wedge}) \longmapsto \omega u + \Dom_{\min}(A) \in \Dom_{\max}(A)/\Dom_{\min}(A),
$$
and both maps are inverses of each other.

Finally, as for both $A$ and $A_{\wedge}$ each class in $\Dom_{\max}/\Dom_{\min}$ has a representative supported in $U(Y)$, and by the standing product type assumptions both adjoint pairings agree on those representatives, the proposition follows.
\end{proof}

\begin{theorem}[Null-Cobordism Theorem]\label{CobordismInvarianceProductType}
Under the stated product type, localization, and compatibility assumptions we have
$$
\SF[\,p(\sigma) : E_1 \subset E_0 \to E_0,\; -\infty < \sigma < \infty\,] = 0.
$$
If moreover
$$
p(\sigma) = \begin{bmatrix} \sigma & D^* \\ D & -\sigma \end{bmatrix} : \begin{array}{c} \Dom(D) \\ \oplus \\ \Dom(D^*) \end{array} \subset L^2\biggl(Y;\begin{array}{c} E_- \\ \oplus \\ E_+ \end{array}\biggr) \to L^2\biggl(Y;\begin{array}{c} E_- \\ \oplus \\ E_+ \end{array}\biggr)
$$
with an elliptic Fredholm operator of first order
$$
D : \Dom(D) \subset L^2(Y;E_-) \to L^2(Y;E_+),
$$
then $\ind[D : \Dom(D) \subset L^2(Y;E_-) \to L^2(Y;E_+)]  = 0$.
\end{theorem}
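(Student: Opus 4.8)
The plan is to combine Proposition~\ref{unitaryequiv} with the spectral flow formula \eqref{SignatureSpectralFlow1} and thereby reduce the vanishing of the spectral flow to the vanishing of the signature of the adjoint pairing of the operator $A$ on $M$. Concretely, Proposition~\ref{unitaryequiv} provides a unitary equivalence of the indefinite inner product spaces $\bigl(\Dom_{\max}(A)/\Dom_{\min}(A),[\cdot,\cdot]_A\bigr)$ and $\bigl(\Dom_{\max}(A_{\wedge})/\Dom_{\min}(A_{\wedge}),[\cdot,\cdot]_{A_{\wedge}}\bigr)$, so their signatures coincide, and by \eqref{SignatureSpectralFlow1} the signature of the second space equals $\SF[\,p(\sigma):E_1\subset E_0\to E_0,\ -\infty<\sigma<\infty\,]$. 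Hence the first assertion will follow once I show that
$$
\sig\bigl(\Dom_{\max}(A)/\Dom_{\min}(A),[\cdot,\cdot]_A\bigr) = 0.
$$

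To get this I would prove that the embedding $\bigl(\Dom_{\max}(A),\|\cdot\|_{A_{\max}}\bigr)\hookrightarrow L^2(M;\E)$ is compact and then invoke Proposition~\ref{ElementaryDeficiency2}, which gives finite and equal deficiency indices, together with the identity $\sig(\Dom_{\max}/\Dom_{\min},[\cdot,\cdot]) = n_+ - n_-$ recalled in Section~\ref{sec-ExtensionTheory}. For the compactness I would fix a cut-off function $\omega$ and split $\mathrm{id} = \omega + (1-\omega)$ on $\Dom_{\max}(A)$. Multiplication by $\omega$ is continuous $\Dom_{\max}(A)\to\Dom_{\max}(A_{\wedge})$ (as in the proof of Proposition~\ref{unitaryequiv}, using the continuity of $1-\omega:\Dom_{\max}(A)\to\Dom_{\min}(A)$), and multiplication by a cut-off function is compact $\Dom_{\max}(A_{\wedge})\to L^2(\R_+\times Y;\pi_Y^*E)$ by the corresponding statement in Section~\ref{sec-indicialoperators}, which applies here since $E_1\hookrightarrow E_0$ is compact and $p(\sigma)$ is invertible on the line $\Im(\sigma) = -\frac{1}{2}$; composing these with the norm-decreasing operation of restricting to $U(Y)$ and extending by zero to $M$ --- and inserting a nested cut-off equal to $1$ on $\supp\omega$ so that the composition literally returns $\omega u$ --- shows that $u\mapsto\omega u$ is compact $\Dom_{\max}(A)\to L^2(M;\E)$. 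For the complementary piece, $1-\omega:\Dom_{\max}(A)\to\Dom_{\min}(A)$ is continuous, and composing with the compact map $1-\tilde{\omega}:\Dom_{\min}(A)\to L^2(M;\E)$ for a cut-off $\tilde{\omega}$ supported so close to $x=0$ that $(1-\tilde{\omega})(1-\omega) = 1-\omega$ shows that $u\mapsto(1-\omega)u$ is compact $\Dom_{\max}(A)\to L^2(M;\E)$. Adding the two pieces gives the compactness of the embedding, hence the first assertion.

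For the second assertion, the hypotheses of Lemma~\ref{SpectralFlowIndex} are in force: $D$ is Fredholm by assumption, and the compactness of $E_1\hookrightarrow E_0$ among the standing assumptions is precisely the compactness of $\Dom(D)\hookrightarrow L^2(Y;E_-)$ and $\Dom(D^*)\hookrightarrow L^2(Y;E_+)$. Lemma~\ref{SpectralFlowIndex} (equivalently \eqref{IndexEqualsSignature}) then yields
$$
\ind[\,D:\Dom(D)\subset L^2(Y;E_-)\to L^2(Y;E_+)\,] = \SF[\,p(\sigma):E_1\subset E_0\to E_0,\ \sigma\in\R\,],
$$
and the right-hand side vanishes by the first assertion of the theorem.

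I expect the compactness of the embedding $\Dom_{\max}(A)\hookrightarrow L^2(M;\E)$ to be the only real obstacle. Although it is forced by the localization and compatibility assumptions, some care is needed in the cut-off bookkeeping: one routes the part of a function near the hypersurface $Y$ through the known mapping properties of the indicial operator $A_{\wedge}$, routes the part away from $Y$ through the compactness assumed for $1-\omega$ on $\Dom_{\min}(A)$, and must keep track of supports so that the two pieces reassemble to the original function. Everything else is a direct application of the results assembled in Sections~\ref{sec-ExtensionTheory}--\ref{sec-indicialoperators}.
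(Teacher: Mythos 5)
Your proposal is correct and follows essentially the same route as the paper: reduce via Proposition~\ref{unitaryequiv} and the spectral flow formula \eqref{SignatureSpectralFlow1} to showing $\sig\bigl(\Dom_{\max}(A)/\Dom_{\min}(A),[\cdot,\cdot]_A\bigr)=0$, obtain this from Proposition~\ref{ElementaryDeficiency2} by proving compactness of the embedding into $L^2(M;\E)$ with a cut-off decomposition routed through the indicial operator, and settle the first-order case by \eqref{IndexEqualsSignature}. The only (immaterial) difference is that you prove compactness of $\Dom_{\max}(A)\hookrightarrow L^2(M;\E)$ directly, using the compact map $\omega:\Dom_{\max}(A_{\wedge})\to L^2(\R_+;E_0)$ from Section~\ref{sec-indicialoperators}, whereas the paper first reduces to $\Dom_{\min}(A)\hookrightarrow L^2(M;\E)$ via the finite deficiency indices and routes the near-boundary piece through $\Dom_{\min}(A_{\wedge})$.
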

\begin{proof}
By Proposition~\ref{unitaryequiv} we have a unitary equivalence between the indefinite inner product spaces
$$
\bigl(\Dom_{\max}(A)/\Dom_{\min}(A),[\cdot,\cdot]_A\bigr) \cong \bigl(\Dom_{\max}(A_{\wedge})/\Dom_{\min}(A_{\wedge}),[\cdot,\cdot]_{A_{\wedge}}\bigr).
$$
Because
$$
\sig\bigl(\Dom_{\max}(A_{\wedge})/\Dom_{\min}(A_{\wedge}),[\cdot,\cdot]_{A_{\wedge}}\bigr) = \SF[\,p(\sigma) : E_1 \subset E_0 \to E_0,\; -\infty < \sigma < \infty\,]
$$
by \eqref{SignatureSpectralFlow1} it suffices to show that
$$
\sig\bigl(\Dom_{\max}(A)/\Dom_{\min}(A),[\cdot,\cdot]_A\bigr) = 0,
$$
and by Proposition~\ref{ElementaryDeficiency2} this will be the case if the embedding $\Dom_{\max}(A) \hookrightarrow L^2(M;\E)$ is compact. Because $A$ has finite deficiency indices we only need to prove that $\Dom_{\min}(A) \hookrightarrow L^2(M;\E)$ is compact. Now let $\omega,\,\tilde{\omega} \in C_c^{\infty}([0,\varepsilon))$ be cut-off functions such that $\tilde{\omega} \equiv 1$ in a neighborhood of $\supp(\omega)$. By assumption the multiplication operator
$$
1-\omega : \Dom_{\min}(A) \to L^2(M;\E)
$$
is compact, and
$$
\tilde{\omega} : \Dom_{\min}(A) \to \Dom_{\min}(A_{\wedge})
$$
is continuous. Now
$$
\Dom_{\min}(A_{\wedge}) = x^{\frac{1}{2}}{\mathscr H}(\R_+;E_1)\cap L^2(\R_+;E_0),
$$
and because $E_1 \hookrightarrow E_0$ is compact, multiplication by $\omega$ is a compact operator
$$
\omega : \Dom_{\min}(A_{\wedge}) \to L^2(\R_+;E_0),
$$
see Section~\ref{sec-indicialoperators}. Consequently, using the product type assumptions, the composition
$$
\omega = \omega\tilde{\omega} : \Dom_{\min}(A) \to L^2(M;\E)
$$
is compact, which shows that the embedding $\iota = \omega + (1-\omega) : \Dom_{\min}(A) \to L^2(M;\E)$ is compact. Finally, the vanishing of the index in the special case of operators of first order follows from \eqref{IndexEqualsSignature}.
\end{proof}


\begin{appendix}

\section{The null-cobordism theorem for closed manifolds}\label{sec-closedmanifolds}

\noindent
In this appendix we discuss a version of the null-cobordism Theorem~\ref{CobordismInvarianceProductType} for closed manifolds. Most of the previous assumptions no longer explicitly appear in this version, e.g., we do not assume product type geometry, and there isn't an operator $A$ on $M$ at the outset, but symbolic assumptions instead. As mentioned in the introduction this is due to the richness of analytic tools available for this situation that allows to create the preconditions needed to apply Theorem~\ref{CobordismInvarianceProductType} instead of having to assume them from the outset.

Let $Y$ be a closed, compact Riemannian manifold and $E \to Y$ be a Hermitian vector bundle, and consider a family
\begin{equation}\label{Asigmafamily}
p(\sigma) = \sum\limits_{j=0}^{\mu}a_j(y,D_y)\sigma^j : C^{\infty}(Y;E) \to C^{\infty}(Y;E), \; \sigma \in \R,
\end{equation}
where $a_j(y,D_y) \in \Diff^{\mu-j}(Y;E)$, and $\mu \geq 1$. We assume that the parameter-dependent principal symbol
\begin{equation}\label{paramprincsymb}
\sym(p)(y,\eta;\sigma) = \sum\limits_{j=0}^{\mu}\sym(a_j)(y,\eta)\sigma^j : E_y \to E_y
\end{equation}
is invertible on $\bigl(T^*Y\times\R\bigr)\setminus 0$, and that $p(\sigma) = p(\sigma)^*$ is (formally) selfadjoint. By elliptic and analytic Fredholm theory,
$$
\R \ni \sigma \mapsto p(\sigma) : H^{\mu}(Y;E) \subset L^2(Y;E) \to L^2(Y;E)
$$
is a family of selfadjoint unbounded Fredholm operators acting in $L^2(Y;E)$ that is invertible for all $\sigma \in \R$ except at finitely many points, and it makes sense to consider the spectral flow
$$
\SF[p(\sigma)]:= \SF[p(\sigma) : H^{\mu}(Y;E) \subset L^2(Y;E) \to L^2(Y;E),\; -\infty < \sigma < \infty] \in \Z
$$
associated with $p(\sigma)$.

\begin{lemma}
The spectral flow is an invariant of the principal symbol \eqref{paramprincsymb} in the sense that if $p_j(\sigma)$, $j=1,2$, are two elliptic selfadjoint families of order $\mu \geq 1$ of the form \eqref{Asigmafamily} with $\sym(p_1)(y,\eta;\sigma) = \sym(p_2)(y,\eta;\sigma)$ then $\SF[p_1(\sigma)] = \SF[p_2(\sigma)]$.
\end{lemma}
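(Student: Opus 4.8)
The plan is to join $p_1$ and $p_2$ by the affine homotopy $p_s(\sigma) = (1-s)\,p_1(\sigma) + s\,p_2(\sigma)$, $s\in[0,1]$, $\sigma\in\R$, and to exploit the homotopy invariance of the spectral flow for a two-parameter family of selfadjoint Fredholm operators over a rectangle in the $(s,\sigma)$-plane.

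First I would record the structural properties of this homotopy. Since $\sym(p_1)(y,\eta;\sigma) = \sym(p_2)(y,\eta;\sigma)$ we have $\sym(a_j^{(1)}) = \sym(a_j^{(2)})$ for each $j$, hence $a_j^{(1)} - a_j^{(2)} \in \Diff^{\mu-j-1}(Y;E)$. Consequently every $p_s(\sigma)$ is again of the form \eqref{Asigmafamily}, is formally selfadjoint (a convex combination with real coefficients of formally selfadjoint families), and has parameter-dependent principal symbol $\sym(p_s) = \sym(p_1)$, which is invertible on $(T^*Y\times\R)\setminus 0$. By the elliptic and analytic Fredholm theory recalled above, each $p_s(\sigma) : H^{\mu}(Y;E) \subset L^2(Y;E) \to L^2(Y;E)$ is therefore a family of selfadjoint Fredholm operators with fixed domain $H^{\mu}(Y;E)$, and $(s,\sigma)\mapsto p_s(\sigma)\in\L(H^{\mu}(Y;E),L^2(Y;E))$ is continuous --- in fact affine in $s$ and polynomial in $\sigma$ --- so it defines a continuous two-parameter family of selfadjoint Fredholm operators with common domain, to which the standard formal calculus of the spectral flow (additivity under concatenation of paths, homotopy invariance relative to endpoints, vanishing along paths of invertible operators) applies. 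Moreover, writing $p_s(\sigma) = p_1(\sigma) + s\bigl(p_2(\sigma) - p_1(\sigma)\bigr)$ and using that $p_2 - p_1$ has parameter-dependent order $\mu - 1$ while $p_1(\sigma)^{-1}$ has parameter-dependent order $-\mu$, a Neumann series argument yields a single $T > 0$ such that $p_s(\sigma)$ is invertible for all $|\sigma|\geq T$ and all $s\in[0,1]$ simultaneously.

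With this in hand the conclusion is the rectangle argument: $\SF[p_j(\sigma)] = \SF[p_j(\sigma) : -T\leq\sigma\leq T]$ for $j=1,2$, and since $(s,\sigma)\mapsto p_s(\sigma)$ is a continuous family of selfadjoint Fredholm operators over the whole rectangle $[0,1]\times[-T,T]$, the total spectral flow around $\partial\bigl([0,1]\times[-T,T]\bigr)$ vanishes. The two horizontal edges $[0,1]\times\{\pm T\}$ contribute nothing because $p_s(\pm T)$ is invertible for every $s$, so, traversing the boundary so that the vertical edge over $s=0$ runs from $\sigma=-T$ to $\sigma=T$, the remaining contributions cancel: $\SF[p_1(\sigma)] - \SF[p_2(\sigma)] = 0$. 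The main obstacle is precisely the uniform-in-$s$ invertibility of $p_s(\sigma)$ for large $|\sigma|$; once parameter-dependent ellipticity is used to control the lower-order perturbation $p_2 - p_1$ in the $\sigma$-weighted Sobolev norms dictated by the parameter-dependent calculus, everything else is routine from the formal properties of the spectral flow as recorded in the references underlying \cite{KrainerSymmetricOps}.
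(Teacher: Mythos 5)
Your proposal is correct and follows essentially the same route as the paper: the affine homotopy $p_s(\sigma)=(1-s)p_1(\sigma)+sp_2(\sigma)$, uniform invertibility of $p_s(\sigma)$ for $|\sigma|\geq T$ and all $s\in[0,1]$, and homotopy invariance of the spectral flow for paths with invertible endpoints (your rectangle formulation is just a restatement of that). The paper simply asserts the existence of the uniform $T$, whereas you additionally justify it via the parameter-dependent calculus; this is a welcome but inessential elaboration.
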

\begin{proof}
Let $R > 0$ be such that
$$
p_1(\sigma) + s[p_2(\sigma)-p_1(\sigma)] : H^{\mu}(Y;E) \subset L^2(Y;E) \to L^2(Y;E)
$$
is invertible for $|\sigma| \geq R > 0$ and all $0 \leq s \leq 1$. Consequently, this family is a homotopy of selfadjoint Fredholm functions on $[-R,R]$, invertible at both endpoints, and by the homotopy invariance of the spectral flow for such families we see that $\SF[p_1(\sigma)] = \SF[p_2(\sigma)]$.
\end{proof}

Suppose there exists a compact Riemannian manifold $M$ with $\partial M = Y$. Utilizing the geodesic flow from the boundary in the direction of the inner normal vector field shows that there exists $\varepsilon > 0$ and a collar neighborhood map $U(Y) \cong [0,\varepsilon)\times Y$ near the boundary such that the metric in $U(Y)$ takes the form $dx^2 + g_Y(x)$ with a smooth family of metrics $g_Y(x)$ on $Y$, $0 \leq x < \varepsilon$, and such that $g_Y(0) = g_Y$ is the given metric on $Y$. Moreover, by choosing $\varepsilon >  0$ small enough, there exists a defining function for $\partial M$ on $M$ that in $U(Y)$ is represented by projection onto the coordinate in $[0,\varepsilon)$. We'll also denote this global defining function by $x : M \to \overline{\R}_+$. In particular,
$$
T^*M\big|_Y = T^*Y \oplus \Span\{dx\big|_Y\}
$$
subject to these choices, and we can split variables $(y,\eta;\sigma) \in T^*M\big|_Y$ accordingly.

\begin{theorem}[Null-Cobordism Theorem]\label{CobordismInv}
Let $M$ be a compact Riemannian manifold $M$ with $\partial M = Y$, and let $\E \to M$ be a Hermitian vector bundle with $\E\big|_Y = E$. Let $T^*M\bigl|_Y \cong T^*Y \times \R$ subject to the choices described above, and suppose there exists a symmetric, elliptic, differential principal symbol $a \in C^{\infty}(T^*M\setminus 0;\End(\pi^*\E))$ of order $\mu$ such that
$$
a(y,\eta;\sigma) = \sym(p)(y,\eta;\sigma) \textup{ for } (y,\eta;\sigma) \in \bigl(T^*M\setminus 0\bigr)\big|_Y,
$$
where $\pi : T^*M \to M$ is the canonical projection. Then $\SF[p(\sigma)] = 0$.
\end{theorem}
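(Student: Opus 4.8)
The plan is to deduce Theorem~\ref{CobordismInv} from the product type null-cobordism Theorem~\ref{CobordismInvarianceProductType}. Out of the data $(M,\E,a,p)$ I will manufacture, on the open manifold $\open{M}=M\setminus Y$, an elliptic operator of product type near $Y$ with indicial family $p(\sigma)$, together with a closed symmetric realization satisfying all the localization and compatibility assumptions of Section~\ref{sec-Theorem}; the conclusion $\SF[p(\sigma)]=0$ is then exactly what Theorem~\ref{CobordismInvarianceProductType} delivers. The verification has three parts: the indicial family, the geometry together with the operator, and the localization/compatibility conditions.

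First, $p(\sigma)$ satisfies the standing hypotheses of Section~\ref{sec-indicialoperators} with $E_0=L^2(Y;E)$ and $E_1=H^{\mu}(Y;E)$ (which equals $H^{\mu}_{\textup{comp}}(Y;E)=H^{\mu}_{\textup{loc}}(Y;E)$ since $Y$ is closed). The $\eta$-principal symbol of $p(\sigma)$ is $\sym(p)(y,\eta;0)=\sym(a_0)(y,\eta)$, which is invertible off the zero section, so every $p(\sigma)$ is an ordinary elliptic differential operator of order $\mu$ on the closed manifold $Y$ and assumption~(i) follows from elliptic and analytic Fredholm theory; assumption~(ii) holds because selfadjointness of $p(\sigma)$ for real $\sigma$ forces $a_j=a_j^*$, whence $p(\overline\sigma)^*=\sum a_j\sigma^j=p(\sigma)$; and assumption~(iii) is anisotropic parameter-dependent ellipticity of $p(\sigma)+i\lambda$, whose symbol $\sym(p)(y,\eta;\sigma)+i\lambda$ is homogeneous of degree $\mu$ under $(\eta,\sigma,\lambda)\mapsto(t\eta,t\sigma,t^{\mu}\lambda)$ and invertible away from the origin because $\sym(p)$ is selfadjoint and invertible on $(T^*Y\times\R)\setminus 0$. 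The embedding $E_1\hookrightarrow E_0$ is compact by Rellich. Only finitely many indicial roots lie in the strip $|\Im\sigma|\le\tfrac12$, so after replacing $p(\sigma)$ by $p(t\sigma)$ for small $t>0$ — which leaves $\SF[p(\sigma)]$ unchanged by homotopy invariance of the spectral flow (see Section~\ref{sec-indicialoperators}) — we may assume $p(\sigma):E_1\to E_0$ is invertible for $0<|\Im\sigma|\le\tfrac12$.

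Next I would set up the geometry and the operator. Deform the collar metric $dx^2+g_Y(x)$ to the product $dx^2+g_Y$ for $x<\delta$, and, since the collar retracts onto $Y$, adjust the Hermitian metric on $\E$ so that $\E\cong\pi_Y^*E$ isometrically there; nothing in the conclusion is affected. I then want an elliptic symmetric differential operator $A$ of order $\mu$ on $\open{M}$ coinciding with the indicial operator $A_{\wedge}=x^{-1}\sum_{j=0}^{\mu}a_j(y,D_y)(xD_x)^j$ on a collar neighborhood of $Y$. Constructing such an $A$ is the heart of the matter. On $\{x=x_0\}$, $x_0>0$, the principal symbol of $A_{\wedge}$ equals $x_0^{-1}\sym(p)(y,\eta;x_0\xi)$, with $\xi$ dual to $x$; rescaling this covariable, $s\mapsto c(s)^{-1}\sym(p)(y,\eta;c(s)\xi)$ with $c(0)=x_0$, $c(1)=1$, is a path of elliptic selfadjoint differential symbols of order $\mu$ joining it to $\sym(p)(y,\eta;\xi)=a\big|_Y$, and the restriction of $a$ to the collar joins $a\big|_Y$ to $a$ on the rest of $M$. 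Concatenating these homotopies along the collar coordinate yields a smooth elliptic selfadjoint order-$\mu$ principal symbol on $T^*\open{M}\setminus 0$ that coincides with that of $A_{\wedge}$ near $Y$ and with $a$ away from the collar. Realizing this symbol by a symmetrized quantization outside the conic region and gluing it to $A_{\wedge}$ by a real partition of unity (the two having the same principal symbol on the overlap, so ellipticity is preserved) produces the desired $A$.

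Finally, take $A_{\min}$ to be the minimal closed extension of $A$ from $C_c^{\infty}(\open{M};\E)$, with $A_{\max}=A_{\min}^*$; by elliptic regularity $H^{\mu}_{\textup{comp}}(\open{M};\E)\subset\Dom_{\min}(A)\subset\Dom_{\max}(A)\subset H^{\mu}_{\textup{loc}}(\open{M};\E)$. The localization and compatibility assumptions of Section~\ref{sec-Theorem} now follow from standard elliptic theory and the locality of $A$ and $A_{\wedge}$: for a cut-off $\omega$ the function $1-\omega$ is supported in a compact subset $\{x\ge\delta'\}$ of $\open{M}$ on which $A$ is a genuine elliptic operator, so $1-\omega$ maps $\Dom_{\max}(A)$ into $H^{\mu}_{\textup{comp}}(\open{M};\E)\subset\Dom_{\min}(A)$ by elliptic regularity and $\Dom_{\min}(A)\to L^2(M;\E)$ compactly by Rellich; and since $A$ and $A_{\wedge}$ coincide near $Y$, while $\Dom_{\min}$ is locally determined and $\Dom_{\min}(A_{\wedge})=x^{1/2}{\mathscr H}(\R_+;E_1)\cap L^2(\R_+;E_0)$ has the localization properties recalled in Section~\ref{sec-indicialoperators}, multiplication by $\omega$ intertwines $\Dom_{\min}(A)$ and $\Dom_{\min}(A_{\wedge})$ continuously, the commutators $[A,\omega]=[A_{\wedge},\omega]$ having order $\le\mu-1$. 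Theorem~\ref{CobordismInvarianceProductType} then applies and gives $\SF[p(\sigma)]=0$. The main obstacle is the construction of $A$: one must recognize that the conic principal symbol forced near $Y$ lies in the same homotopy class of elliptic selfadjoint order-$\mu$ symbols as $a$, and it is precisely the hypothesis on $a$ that supplies this, permitting the gluing; the remaining verifications are routine.
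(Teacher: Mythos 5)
Your proposal is correct and follows essentially the same route as the paper: deform the metric, Hermitian structure, and principal symbol to product/conic type in the collar (your rescaling $c(s)^{-1}\sym(p)(y,\eta;c(s)\xi)$ concatenated with the pullback of $a$ along the collar is precisely the paper's $\tilde{a}(x,y,\eta;\sigma)=\psi(x)^{-1}a(\phi(x),y,\eta;\psi(x)\sigma)$), quantize to a symmetric elliptic $A$ agreeing with $A_{\wedge}$ near $Y$, rescale $p(t\sigma)$ to clear the strip $0<|\Im\sigma|\le\tfrac12$, and conclude via compactness of the embedding of the domain into $L^2$. The only cosmetic difference is that you verify the abstract localization/compatibility hypotheses of Theorem~\ref{CobordismInvarianceProductType}, whereas the paper identifies $\Dom_{\min}(A)=x^{1/2}H^{\mu}_b(M;\E)$ and the maximal domains explicitly via the $b$-calculus and reads off the unitary equivalence directly — but that is exactly the reduction the appendix announces as its intent.
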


With the family $p(\sigma)$ from \eqref{Asigmafamily} we associate the indicial operator
\begin{equation}\label{Awedge}
A_{\wedge} = x^{-1}\sum\limits_{j=0}^{\mu}a_j(y,D_y)(xD_x)^j: C_c^{\infty}(\R_+\times Y;E) \subset L^2(\R_+\times Y;E) \to L^2(\R_+\times Y;E).
\end{equation}
Here we also write $E$ for its pull-back to $\R_+\times Y$ with respect to the projection onto $Y$, and equip $\R_+\times Y$ with the product metric $dx^2 + g_Y$. Then $A_{\wedge}$ is symmetric and densely defined. Let $\Dom_{\min}(A_{\wedge})$ be the domain of the closure, and $\Dom_{\max}(A_{\wedge})$ be the domain of the adjoint.

\begin{proof}[Proof of Theorem~\ref{CobordismInv}]
In the previously fixed collar neighborhood $U(Y) \cong [0,\varepsilon) \times Y$ we utilize standard deformations of the Riemannian metric on $M$, the Hermitian metric on $\E$, and the principal symbol $a$ to reduce to a product type structure near the boundary, as follows:

Pick an isomorphism $\E\big|_{U(Y)} \cong \pi_Y^*E$ that is the identity over $Y$, where $\pi_Y : [0,\varepsilon)\times Y \to Y$ is the projection map. With respect to the pull-back of the given Hermitian metric on $E$ to $\pi_Y^*E$, the metric on $\E\big|_{U(Y)}$ under this isomorphism is then represented by $h(x,y) \in C^{\infty}([0,\varepsilon)\times Y;\End(\pi_Y^*E))$ such that $h = h^* > 0$ and $h(0,y) = \textup{Id}$. Choose $C^{\infty}$-functions $\phi,\psi : [0,\varepsilon) \to \R$ with
\begin{gather*}
\phi \equiv 0 \textup{ on } 0 \leq x \leq \tfrac{\varepsilon}{3},\; 0 < \phi < \tfrac{2\varepsilon}{3} \textup{ on } \tfrac{\varepsilon}{3} < x < \tfrac{2\varepsilon}{3},  \textup{ and } \phi \equiv x \textup{ on } \tfrac{2\varepsilon}{3} \leq x < \varepsilon; \\
\psi  \equiv x \textup{ on } 0 \leq x \leq \tfrac{\varepsilon}{3},\; \psi > 0 \textup{ on } \tfrac{\varepsilon}{3} < x < \tfrac{2\varepsilon}{3}, \textup{ and } \psi \equiv 1 \textup{ on } \tfrac{2\varepsilon}{3} \leq x < \varepsilon.
\end{gather*}
We then deform the Riemannian metric on $U(Y)$ and Hermitian metric on $\E\big|_{U(Y)}$ to
$$
\tilde{g} = dx^2 + g_Y(\phi(x)) \textup{ and } \tilde{h}(x,y) = h(\phi(x),y) \in C^{\infty}([0,\varepsilon)\times Y;\End(\pi_Y^*E)),
$$
respectively, which both connect seamlessly with the Riemannian metric on $M$ outside $U(Y)$, and the Hermitian metric on $\E$. We also change the principal symbol in $\open U(Y)$ to
\begin{equation}\label{princsymbalt}
\tilde{a}(x,y,\eta;\sigma) = \psi(x)^{-1}a(\phi(x),y,\eta;\psi(x)\sigma) : E_y \to E_y
\end{equation}
for $(x,y,\eta;\sigma) \in T^*\bigl((0,\varepsilon)\times Y\bigr)\setminus 0$ with the obvious identifications of variables, which again connects seamlessly outside the collar neighborhood. The new homogeneous principal symbol $\tilde{a} \in C^{\infty}(T^*\open M\setminus 0,\End(\pi^*\E))$ is symmetric with respect to the new metric on $\E$, and elliptic over $\open M$. In $\open U(Y)$ we have
$$
\tilde{a}(x,y,\eta;\sigma) = x^{-1}\sym(p)(y,\eta;x\sigma) : E_y \to E_y \textup{ for } 0 < x < \tfrac{\varepsilon}{3}
$$
by construction, which aligns with the principal symbol of $A_{\wedge}$ from \eqref{Awedge}. Let now $A \in \Diff^{\mu}(\open M;\E)$ be symmetric $C_c^{\infty}(\open M;\E) \to C_c^{\infty}(\open M;\E)$ with respect to the $L^2$-inner product associated with the modified metrics on $M$ and $\E$, respectively, such that the principal symbol $\sym(A) = \tilde{a}$ on $T^*\open M \setminus 0$, and such that in $\open U(Y)$ we have $A = A_{\wedge}$ on $C_c^{\infty}((0,\frac{\varepsilon}{4})\times Y;E)$. Then
$$
A = x^{-1}P : C_c^{\infty}(\open M;\E) \subset L^2(M;\E) = x^{-\frac{1}{2}}L^2_b(M;\E) \to x^{-\frac{1}{2}}L^2_b(M;\E)
$$
is symmetric, and $P \in \Diff^{\mu}_b(M;\E)$ is $b$-elliptic (see \cite{RBM2}). Moreover, by construction $p(\sigma)$ is the indicial family of the operator $P$.

By analytic Fredholm theory $p(\sigma) : H^{\mu}(Y;E) \to L^2(Y;E)$ is invertible for $\sigma \in \C$ except for the discrete set $\spec_b(p)$. In the sequel it will be convenient to assume that $\spec_b(p)\cap\{\sigma \in \C;\; 0 < |\Im(\sigma)| \leq \frac{1}{2}\} = \emptyset$. As explained in Section~\ref{sec-indicialoperators}, this can be achieved by replacing $p(\sigma)$ by $p(t\sigma)$ for sufficiently small $t > 0$ if necessary, which does not impact the spectral flow. Moreover, the assumptions of the theorem pertaining to the principal symbol of $p(\sigma)$ also hold for $p(t\sigma)$; to see this pick a $C^{\infty}$-function $\chi : [0,\varepsilon) \to \R$ with
$$
\chi  \equiv t \textup{ on } 0 \leq x \leq \tfrac{\varepsilon}{3},\; \chi > 0 \textup{ on } \tfrac{\varepsilon}{3} < x < \tfrac{2\varepsilon}{3}, \textup{ and } \chi \equiv 1 \textup{ on } \tfrac{2\varepsilon}{3} \leq x < \varepsilon,
$$
and alter the principal symbol \eqref{princsymbalt} in $\open U(Y)$ to
$$
\tilde{a}(x,y,\eta;\sigma) = \psi(x)^{-1}a(\phi(x),y,\eta;\psi(x)\chi(x)\sigma) : E_y \to E_y
$$
for $(x,y,\eta;\sigma) \in T^*\bigl((0,\varepsilon)\times Y\bigr)\setminus 0$. We may thus proceed without loss of generality under the assumption that $\spec_b(p)\cap\{\sigma \in \C;\; 0 < |\Im(\sigma)| \leq \frac{1}{2}\} = \emptyset$. In view of Section~\ref{sec-indicialoperators} for $A_{\wedge}$ and by invoking elliptic regularity for $A$ we then get
\begin{align*}
\Dom_{\min}(A_{\wedge}) &= x^{\frac{1}{2}}H^{\mu}_b(\R_+;L^2(Y;E))\cap x^{\frac{1}{2}}L^2_b(\R_+;H^{\mu}(Y;E))\cap L^2(\R_+\times Y;E), \\
\Dom_{\min}(A) &= x^{\frac{1}{2}}H^{\mu}_b(M;\E), \\
\intertext{and}
\Dom_{\max}(A_{\wedge}) &= \Dom_{\min}(A_{\wedge}) \oplus \bigoplus\limits_{\sigma_0 \in \spec_b(p)\cap\R}\E_{\sigma_0}(p), \\
\Dom_{\max}(A) &= \Dom_{\min}(A) \oplus \bigoplus\limits_{\sigma_0 \in \spec_b(p)\cap\R}\E_{\sigma_0}(p),
\end{align*}
where $\E_{\sigma_0}(p)$ is defined as in \eqref{Esigma0pdef} based on a cut-off function $\omega \in C_c^{\infty}([0,\frac{\varepsilon}{4}))$ with $\omega \equiv 1$ near $x = 0$ so that elements in $\E_{\sigma_0}(p)$ can interchangeably be regarded both as sections of $E$ on $\R_+\times Y$, as well as sections of $\E$ on $M$ supported near the boundary. In particular, this implies that
$$
\bigl(\Dom_{\max}(A)/\Dom_{\min}(A),[\cdot,\cdot]_A\bigr) \cong \bigl(\Dom_{\max}(A_{\wedge})/\Dom_{\min}(A_{\wedge}),[\cdot,\cdot]_{A_{\wedge}}\bigr)
$$
because $[u,v]_{A_{\wedge}} = [u,v]_{A}$ for $u,v \in \bigoplus\limits_{\sigma_0 \in \spec_b(p)\cap\R}\E_{\sigma_0}(p)$ by construction. Finally, it remains to note that $\Dom_{\max} \hookrightarrow x^{-\frac{1}{4}}H^{\mu}_b(M;\E)$, and the embedding $x^{-\frac{1}{4}}H^{\mu}_b(M;\E) \hookrightarrow x^{-\frac{1}{2}}L^2_b(M;\E) = L^2(M;\E)$ is compact.
\end{proof}

Theorem~\ref{CobordismInv} and Lemma~\ref{SpectralFlowIndex} imply:

\begin{corollary}[Cobordism Invariance of the Index]
Suppose that $E = E_- \oplus E_+$ is an orthogonal direct sum, and that the family \eqref{Asigmafamily} is of the form
$$
{\mathscr D}(\sigma) = \begin{bmatrix}
\sigma & D^* \\ D & -\sigma
\end{bmatrix} : C^{\infty}\Biggl(Y;\begin{array}{c} E_- \\ \oplus \\ E_+\end{array}\Biggr) \to C^{\infty}\Biggl(Y;\begin{array}{c} E_- \\ \oplus \\ E_+\end{array}\Biggr), \; \sigma \in \R,
$$
where $D : C^{\infty}(Y;E_-) \to C^{\infty}(Y;E_+)$ is an elliptic differential operator of first order, and $D^* : C^{\infty}(Y;E_+) \to C^{\infty}(Y;E_-)$ is its (formal) adjoint. Then
$$
\SF[{\mathscr D}(\sigma)] = \ind D = \dim\ker(D) - \dim\ker(D^*).
$$
In particular, if the assumptions of Theorem~\ref{CobordismInv} hold, then $\ind(D) = 0$.
\end{corollary}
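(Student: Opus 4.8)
The plan is to read off the identity $\SF[\mathscr{D}(\sigma)] = \ind D$ directly from Lemma~\ref{SpectralFlowIndex} and then feed in Theorem~\ref{CobordismInv} for the vanishing. First I would fix the functional analytic data: take $H_1 = L^2(Y;E_-)$, $H_2 = L^2(Y;E_+)$, and let $D$ also denote the closure in $L^2$ of the elliptic first order differential operator $D : C^{\infty}(Y;E_-) \to C^{\infty}(Y;E_+)$. Since $Y$ is closed and compact, elliptic regularity gives $\Dom(D) = H^1(Y;E_-)$, the Hilbert space adjoint $D^*$ equals the $L^2$-closure of the formal adjoint with $\Dom(D^*) = H^1(Y;E_+)$, both operators are Fredholm, and the embeddings $H^1(Y;E_\pm) \hookrightarrow L^2(Y;E_\pm)$ are compact by Rellich's theorem. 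Thus the standing hypotheses of the ``Operators of first order'' subsection are satisfied, and with $E_0 = H_1\oplus H_2 = L^2(Y;E)$ and $E_1 = \Dom(D)\oplus\Dom(D^*) = H^1(Y;E) = H^{\mu}(Y;E)$ (recall $\mu = 1$) the family $\mathscr{D}(\sigma)$ of the corollary is precisely the family considered in Lemma~\ref{SpectralFlowIndex}. That lemma then gives $\SF[\mathscr{D}(\sigma)] = \ind D$, and since $D$ is Fredholm with closed range one has $L^2(Y;E_+) = \operatorname{ran}(D)\oplus\ker(D^*)$ orthogonally, so $\ind D = \dim\ker(D) - \dim\ker(D^*)$; this last identity is in any case exactly what is recorded in the proof of Lemma~\ref{SpectralFlowIndex}.

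For the ``in particular'' clause I would first note that $\mathscr{D}(\sigma)$ is an admissible family for the appendix: its parameter-dependent principal symbol $\sym(\mathscr{D})(y,\eta;\sigma) = \begin{bmatrix} \sigma & \sym(D)^* \\ \sym(D) & -\sigma \end{bmatrix}$ is symmetric and, by ellipticity of $D$, invertible on $\bigl(T^*Y\times\R\bigr)\setminus 0$, and $\mathscr{D}(\sigma) = \mathscr{D}(\sigma)^*$. If the hypotheses of Theorem~\ref{CobordismInv} hold, i.e. there is a compact Riemannian manifold $M$ with $\partial M = Y$, a Hermitian bundle $\E\to M$ with $\E\big|_Y = E$, and a symmetric elliptic differential principal symbol $a$ on $T^*M\setminus 0$ restricting to $\sym(\mathscr{D})$ over $Y$, then Theorem~\ref{CobordismInv} yields $\SF[\mathscr{D}(\sigma)] = 0$, and combined with the first part this forces $\ind D = 0$.

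I do not expect a genuine obstacle here; the only point that needs care is confirming that the closed $L^2$-realizations of the differential operators $D$ and $D^*$ on the closed manifold $Y$ agree with their Sobolev-space descriptions and are adjoint to one another, so that the abstract Lemma~\ref{SpectralFlowIndex} applies verbatim. This is routine elliptic theory on closed manifolds, so the proof amounts to little more than assembling Lemma~\ref{SpectralFlowIndex} and Theorem~\ref{CobordismInv}.
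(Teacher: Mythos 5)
Your proposal is correct and matches the paper exactly: the paper derives this corollary in one line as an immediate consequence of Theorem~\ref{CobordismInv} combined with Lemma~\ref{SpectralFlowIndex}, which is precisely your argument. The routine verifications you add (elliptic regularity identifying $\Dom(D)=H^1$, Rellich compactness, invertibility of the parameter-dependent symbol) are exactly the implicit steps the paper leaves to the reader.
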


\end{appendix}


\end{document}